\newtheorem{lemma}{Lemma}[section]
\newtheorem{prop}{Proposition}[section]
\newtheorem{te}{Theorem}[section]
\newtheorem{cor}{Corollary}[section]
\newtheorem{ass}{A}
\newcommand{\be}{\begin{equation}}
\newcommand{\ee}{\end{equation}}
\newcommand{\ba}{\begin{array}}
\newcommand{\ea}{\end{array}}
\newcommand{\bee}{\begin{eqnarray*}}
\newcommand{\eee}{\end{eqnarray*}}
\newcommand{\bea}{\begin{eqnarray}}
\newcommand{\eea}{\end{eqnarray}}
\newcommand{\II}{\mathbb{I}}
\newcommand{\WW}{\mathbb{W}}
\newcommand{\RR}{\mathbb{R}}
\newcommand{\BB}{\mathbb{B}}
\newcommand{\AAA}{\mathbb{A}}
\newcommand{\DD}{\mathbb{D}}
\newcommand{\MM}{\mathbb{M}}
\newcommand{\LL}{\mathbb{L}}
\newcommand{\GG}{\mathbb{G}}
\newcommand{\UU}{\mathbb{U}}
\newcommand{\VV}{\mathbb{V}}
\newcommand{\mx}{\mathbf{x}}
\newcommand{\mb}{\mathbf{b}}
\newcommand{\mc}{\mathbf{c}}
\newcommand{\pp}{\mathbf{p}}
\begin{document}

\title{EFIX: Exact Fixed Point Methods for Distributed Optimization}

\author{ Du\v{s}an Jakoveti\'c \footnotemark[1] \and Nata\v sa Kreji\'c \footnotemark[1]
 \and Nata\v sa Krklec Jerinki\'c \footnotemark[1]
 }

\maketitle

\begin{abstract} {We consider strongly convex distributed consensus optimization over
connected networks. EFIX, the proposed method, is derived using 
quadratic penalty approach. In more detail, we use the standard
reformulation -- transforming the original problem into a constrained
problem in a higher dimensional space -- to define a sequence of
suitable quadratic penalty subproblems with increasing penalty
parameters. For quadratic objectives, the corresponding sequence
consists of quadratic penalty subproblems. For the generic strongly
convex case, the objective function is approximated with a quadratic
model and hence the sequence of the resulting penalty subproblems is
again quadratic. EFIX is then derived by solving each of the quadratic
penalty subproblems via a fixed point (R)-linear solver, e.g., Jacobi Over-Relaxation method. The exact convergence is proved as well as the worst case complexity of order  $\cal{O}(\epsilon^{-1})$  for the quadratic case. In the case of  strongly
convex generic functions, the standard result for penalty methods is obtained. 
Numerical results indicate that the method is highly competitive with
state-of-the-art exact first order methods, requires smaller
computational and communication effort, and is robust to the choice of
algorithm parameters.
}


{\bf Key words:} Fixed point methods, quadratic penalty method, distributed optimization., strongly convex problems\\
\end{abstract}

\section{Introduction}

We consider problems of the form 
\be \label{51} \min_{y \in \mathbb{R}^n} f(y)=\sum_{i=1}^{N} f_i(y),
\ee
where $f_i : \mathbb{R}^n \to \mathbb{R}$ are strongly convex  local cost functions.
A decentralized optimization framework is  considered, more precisely,  we assume decentralized but connected network of $N$ nodes.

Distributed consensus optimization over networks has become a mainstream research topic, e.g., \cite{wei,BoydADMM,SayedEstimation,scutari,dusan,arxivVersion}, motivated by numerous applications in signal processing \cite{novo1}, control \cite{JoaoMotaMPC}, Big Data analytics \cite{scutari2}, social networks \cite{novo3}, etc. Various methods have been proposed in the literature, e.g., \cite{d1,d2,d3,d4,d5,d6,d7,d8,d9,d10}.

 While early distributed (sub)gradient methods exhibit several useful features, e.g., \cite{nedic_T-AC}, they also have the drawback that they do not converge to the exact problem solution when applied with a constant step-size; that is, for exact convergence, they need to utilize a diminishing step-size \cite{Nedickaskadni}. To address this issue, several different mechanisms have been proposed. Namely, in  \cite{extra}  two different weight-averaging matrices at two consecutive iterations are used.  A gradient-tracking technique where the local updates are modified so that they track the network-wide average gradient of the nodes' local cost functions is proposed and analyzed in \cite{Espectral, harnessing}. The authors of \cite{wei} incorporate multiple consensus steps per each gradient update to obtain the  convergence to the exact solution.

In this paper we investigate a different strategy to develop a novel class of exact distributed methods by employing quadratic penalty approach. The method is defined by the standard reformulation of distributed problem (\ref{51}) into constrained problem in $ \mathbb{R}^{nN} $ with constraints that penalize the differences in local approximations of the solution. The reformulated constrained problem is then solved by a quadratic penalty method. Given that the sequence of penalty subproblems is quadratic, as will be explained further on, we employ a fixed point linear solver to find zeroes of the corresponding gradients. Thus, we abbreviated the method as EFIX - Exact Fixed Point. As it will be detailed further ahead, the EFIX method possesses properties that are at least comparable with existing alternatives in terms of efficiency and  required knowledge of system parameters. 

In more detail,  the proposed approach is as follows. The constrained distributed problem in $ \mathbb{R}^{nN} $ is reformulated by adding a quadratic penalty term that penalizes the differences of solution estimates at neighbouring nodes across the network. Then the sequence of penalty problems are solved inexactly,  wherein the corresponding penalty parameters increase over time to make the algorithm exact. The algorithm parameters, such as the penalty parameter sequence and the levels of inexactness of the  (inner) penalty problems are designed such that the overall algorithm exhibits efficient behaviour. We consider two types of strongly convex objective functions - quadratic and generic strongly convex function. For quadratic objective function the subproblems are clearly quadratic, while in the case of generic function we approximate the objective function at the current iteration with a quadratic model. Hence the penalty subproblems are all quadratic and strongly convex. Solving these problems boils down to finding zeroes of the gradients, i.e. to solving systems of linear equations for each subproblem. To solve these systems of linear equations one can employ any distributed linear solver like fixed point iterative methods.   The proposed framework is general and we exemplify the framework by employing the  Jacobi Over-Relaxation (JOR) method for solving the penalty subproblems.  Numerical tests on both simulated and real data sets demonstrate that the resulting algorithms are (at least) comparable with existing alternatives like \cite{harnessing} in terms of the required computational and communication costs, as well as the required knowledge of global system parameters for proper algorithm execution such as the global (maximal) Lipschitz constant of the local gradients $L$, strong convexity constant $\mu$ and the network parameters.   

From the theoretical point of view the following results are established. First, for the quadratic cost functions, we show that either a sequence generated by the EFIX method is unbounded or it converges to  the exact solution of the original problem \eqref{51}. The worst-case complexity result of order  $\cal{O}(\epsilon^{-1})$ is proved. In the generic case, for strongly convex costs with Lipschitz continuous gradients,  the obtained result corresponds to the well-known result in the classical, centralized optimization - if the iterative sequence converges then its limit is the solution of the original problem. Admittedly, this result is weaker than what is known for existing alternatives like, e.g., \cite{harnessing}, but are enough to theoretically certify the methods and are in line with the general theory of quadratic penalty methods; see, e.g., \cite{Nocedal}. Numerical examples nevertheless demonstrate advantages of the proposed approach. 
Moreover, the convergence results of the proposed method are obtained  although the Linear Independence Constraint Qualification, LICQ is violated. 

It is worth noting that penalty approaches have been studied earlier in the context of distributed consensus optimization, e.g., \cite{qpLinovi, qpLi, scno, Zhou}. The authors of \cite{Zhou} allow for nondifferentiable costs, but their analysis relies on Lagrange multipliers and the distance from a closed, convex feasible set which plays a crucial role in the analysis. In \cite{scno}, a differentiable exact penalty function is employed, but the problem under consideration assumes local constraints and separable objective function. Moreover, LICQ is assumed to hold. In our case, separating the objective function yields the constrained optimization problem \eqref{52} where the LICQ is violated. 
The authors of \cite{qpLi} consider more general problems with possibly nondiffrenetiable part of the objective function and linear constraints and provide the analysis for the decentralized distributed optimization problems in particular (Section 4 of \cite{qpLi}).  They show the convergence to an exact solution by carefully designing the penalty parameters and the step size sequence. The proposed algorithm boils down to the distributed gradient with time-varying step sizes.  The convergence is of the order $\cal{O}(1/\sqrt{k})$, i.e., $\cal{O}(1/k)$ for the accelerated version. Comparing with EFIX, we notice that EFIX algorithm needs the gradient calculations only in the outer iterations, whenever the penalty parameter is increased and a new subproblem is generated, which makes it computationally less demanding. The numerical efficiency of the method in \cite{qpLi} is not documented to the best of out knowledge, although the convergence rate results are very promising. The strong convexity is not imposed in \cite{qpLi}, and possibilities for relaxation of convexity requirements in EFIX are going to be the  subject of further research. The algorithm presented in \cite{qpLinovi} is also based on penalty approach. A sequence of subproblems with increasing penalty parameters is defined and solved by accelerated proximal gradient method. Careful adjustment of algorithmic parameters yields a better complexity result than the results presented here. However, with respect to existing work, the proposed EFIX framework is more general in terms of the subsumed algorithms and can accommodate arbitrary R-linearly-converging solver for quadratic penalty subproblems. Finally, another important advantage of EFIX is the robustness with respect to algorithmic parameters.  

The paper is organized as follows. In Section 2 we give some preliminaries. EFIX method for quadratic problems is defined and analyzed in Section 3. The analysis is extended to general convex case in Section 4 and the numerical results for both quadratic and general case are presented in Section 5. Some conclusions are drawn in Section 6. 

\section{Preliminaries}

The notation we will use further is the following.  With $ \mathbb{A}, \mathbb{B}, \ldots $ we denote  matrices in  $  \mathbb{R}^{nN \times nN}  $ with block elements $ \mathbb{A} = [A_{ij}], \; A_{ij} \in \mathbb{R}^{n \times n} $ and elements $ a_{ij} \in \mathbb{R}. $ Consequently, we denote $ A,B,... \in \mathbb{R}^{n \times n}.  $ The vectors of corresponding dimensions will be denoted as $ \mathbf{x} \in \mathbb{R}^{nN}  $ with components $  x_i \in \mathbb{R}^n $ as well as $ y \in \mathbb{R}^n. $ The norm $ \|\cdot\| $ is the Euclidean norm.

Let us specify more precisely the setup we are considering here.  The network of connected  agents  is  represented by a communication matrix $W = W^T \in \mathbb{R}^{N \times N}$ which is assumed to be  doubly  stochastic. The elements of  $ W $ have the property $w_{i,j}>0$ if and only if there is a direct link between nodes $i$ and $j$. Denote by $O_i$ the set of neighbors of node $i$ and let $\bar{O_i}=O_i \bigcup \{i\}.$ The assumptions on the network are stated as follows.

 \begin{ass} \label{A1}  The matrix $ W  \in \mathbb{R}^{N \times N} $  is  symmetric, doubly stochastic
 and 
 $$ w_{ij} > 0 \mbox{ if } j\in \bar{O}_i, \;  w_{ij} = 0 \mbox{ if }  j \notin \bar{O}_i$$ The network  represented by the communication matrix $W$ is connected and undirected.
 \end{ass}

 Let us assume that each of $N$ nodes has its local cost function $f_i$ and has access to  the corresponding derivatives of this local function. Under the assumption A\ref{A1}, the problem \eqref{51} has the equivalent form 
\be \label{52} \min_{\mathbf{x} \in \mathbb{R}^{nN}} F(\mathbf{x}):=\sum_{i=1}^{N} f_i(x_i) \quad \mbox{s. t. } \quad (\II - \WW)^{1/2} \mathbf{x}=0,
\ee
where $\mathbf{x}=(x_1;...;x_N) \in \RR^{nN}$, $\WW=W \otimes I \in \RR^{nN \times nN}$ and $\II \in \RR^{nN \times nN} $ is the identity matrix.
Therefore,  denoting by $ \LL =\II - \WW$ the Laplacian matrix,  the quadratic penalty reformulation of this problem is
\be \label{53} \min_{\mathbf{x} \in \mathbb{R}^{nN}} \Phi_{\theta} (\mathbf{x}):=F(\mathbf{x}) +\frac{\theta}{2} \mathbf{x}^T \LL \mathbf{x},
\ee
where $\theta >0$ is the penalty parameter. EFIX method proposed in the sequel follows the sequential quadratic programming framework where the sequence of problems \eqref{53} are solved approximately.  

\section{EFIX-Q: Quadratic problems}

Quadratic costs are very important subclass of problems that we consider. One of the typical example is  linear least squares problem which comes from linear regression models, data fitting etc. We start the analysis with the quadratic costs given by 
\be \label{81} f_i(y)=\frac{1}{2} (y-b_i)^T B_{ii} (y-b_i),\ee
where $B_{ii} = B_{ii}^T\in \RR^{n \times n}, b_i \in \RR^n$. Let us denote by $\mathbb{B}=diag(B_{11},...,B_{NN})$ the block-diagonal matrix and $\mathbf{b}=(b_1;...;b_N) \in \RR^{n N}$.  Then, 
$$  \Phi_{\theta}(\mx)= \frac{1}{2} (\mx-\mb)^T \BB (\mx-\mb)+ \frac{\theta}{2} \mx^T\LL \mx $$
and 
 $$ \nabla \Phi_{\theta}(\mx)= (\BB+\theta \LL) \mx -\BB \mb. $$
Thus, solving $\nabla \Phi_{\theta}(\mx)=0$ is equivalent to solving the linear system 
 \be \label{83} \AAA \mx =\mc, \quad \AAA:=\BB+\theta \LL, \quad \mc:=\BB \mb.\ee
Under the following assumptions, this system can be solved in a distributed, decentralized manner by applying a suitable linear solver To make the presentation more clear we concentrate here on the JOR method, without loss of generality. 
\begin{ass} \label{B1}  Each function $f_i, \; i=1,\ldots, N $ is $\mu$-strongly convex. 
 \end{ass}
This assumption implies that the diagonal elements of Hessian matrices $B_{ii}$ are positive, bounded by $\mu$ from below. This  can be easily verified by the fact that  $y^T B_{ii} y\geq \mu \|y\|^2$ for $y=e_j, j=1,...,n$ where $e_j$ is the  $j$-th column of the identity matrix $ I \in \mathbb{R}^{n \times n}. $  Clearly,  the diagonal elements of $\AAA$ are positive. Moreover, $\AAA$ is positive definite with minimal eigenvalue bounded from below  with $\mu$. Therefore,  for arbitrary $ \mx_0 \in \mathbb{R}^{nN} $ and $ \AAA, \mc $ given in (\ref{83}), we can define the JOR iterative procedure as 
\be \label{8} \mx^{k+1}=\MM \mx^k+\pp, \ee
\be \label{9} \MM=q \DD^{-1}\GG+(1-q)\II, \quad \pp=q \DD^{-1} \mc,\ee
where $\DD$ is a diagonal matrix with $d_{ii}=a_{ii}$ for all $i=1,...,nN$, $\GG=\DD-\AAA$,  $\II$ is the identity matrix and $q$ is the relaxation parameter. The structure of $ \AAA $ and $ \MM $ makes the iterative method specified in (\ref{8}) completely distributed assuming that each node $ i $ has the corresponding column of $ \MM,$ and thus we do not need any additional adjustments of the linear solver to the distributed network. The convergence interval for the relaxation parameter $ q $ is well known in this case, see e.g.   \cite{greenbaum}.   
\begin{lemma} \label{L1} Suppose that the assumptions A\ref{A1}-A\ref{B1} are satisfied.  Then the JOR method  converges for all  $q \in (0,2/\rho(\DD^{-1}\AAA))$.
\end{lemma}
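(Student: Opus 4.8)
The plan is to reduce the convergence of the stationary linear iteration \eqref{8} to a statement about the spectrum of the iteration matrix $\MM$. Since \eqref{8} has the form $\mx^{k+1}=\MM\mx^k+\pp$ and $\AAA$ is invertible (so that a unique fixed point $\mx^{*}$ exists), the error $\mx^k-\mx^{*}$ obeys $\mx^{k+1}-\mx^{*}=\MM(\mx^k-\mx^{*})$; hence the method converges from every starting point $\mx_0$ if and only if $\rho(\MM)<1$. First I would simplify $\MM$: substituting $\GG=\DD-\AAA$ from \eqref{9} gives $\MM=q\DD^{-1}(\DD-\AAA)+(1-q)\II=\II-q\DD^{-1}\AAA$, so the whole argument reduces to locating the eigenvalues of $\DD^{-1}\AAA$.

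The key step is to show that $\DD^{-1}\AAA$ has a real, strictly positive spectrum. Here I would invoke the structure already recorded above: under A\ref{A1}--A\ref{B1} the matrix $\AAA=\BB+\theta\LL$ is symmetric positive definite with minimal eigenvalue at least $\mu$, and its diagonal entries $a_{ii}$ are strictly positive, because the diagonal of $\BB$ is bounded below by $\mu$ while the diagonal of $\LL=\II-\WW$ is nonnegative ($W$ being doubly stochastic with positive diagonal). Thus $\DD$ is a positive diagonal matrix and $\DD^{1/2},\DD^{-1/2}$ are well defined. The similarity transformation
\be
\DD^{1/2}\left(\DD^{-1}\AAA\right)\DD^{-1/2}=\DD^{-1/2}\AAA\DD^{-1/2}
\ee
shows that $\DD^{-1}\AAA$ is similar to $\DD^{-1/2}\AAA\DD^{-1/2}$, which is symmetric and, being congruent to $\AAA$ (Sylvester's law of inertia), positive definite. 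Consequently all eigenvalues $\lambda_1,\ldots,\lambda_{nN}$ of $\DD^{-1}\AAA$ are real and positive, with $\lambda_{\max}=\rho(\DD^{-1}\AAA)$.

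With the spectrum identified, the conclusion follows by the spectral mapping argument applied to $\MM=\II-q\DD^{-1}\AAA$: its eigenvalues are exactly $1-q\lambda_i$. Since each $\lambda_i>0$ and $q>0$, the requirement $\rho(\MM)=\max_i|1-q\lambda_i|<1$ is equivalent to $0<q\lambda_i<2$ for every $i$, and the binding constraint comes from $\lambda_{\max}$. This gives precisely $q\in\left(0,\,2/\rho(\DD^{-1}\AAA)\right)$, as claimed.

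The only genuinely delicate point is the reality and positivity of the spectrum of the nonsymmetric matrix $\DD^{-1}\AAA$; once its similarity to the symmetric positive definite matrix $\DD^{-1/2}\AAA\DD^{-1/2}$ is recorded, everything else consists of standard facts about stationary linear iterations and a one-line interval computation. It is worth noting that this is a classical JOR result for positive definite systems, so the task is essentially to verify that the hypotheses A\ref{A1}--A\ref{B1} indeed place the penalized system \eqref{83} in that setting.
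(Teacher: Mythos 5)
Your proof is correct. The paper does not actually prove Lemma \ref{L1} --- it simply cites the classical JOR convergence result (Greenbaum) --- and your argument is precisely the standard one behind that citation: rewrite $\MM=\II-q\DD^{-1}\AAA$, use the similarity $\DD^{-1}\AAA\sim\DD^{-1/2}\AAA\DD^{-1/2}$ to get a real positive spectrum (the hypotheses A1--A2 guaranteeing that $\AAA$ is symmetric positive definite with positive diagonal, as the paper notes just before the lemma), and read off $\rho(\MM)<1$ iff $0<q<2/\rho(\DD^{-1}\AAA)$.
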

The  JOR method (\ref{8})-(\ref{9}) can be stated in the distributed manner as follows. Notice that  the blocks of $ \AAA $ are given by 
\be \label{84} A_{ii}=B_{ii}+\theta (1-w_{ii})I, \quad \mbox{and} \quad  A_{ij}=-\theta w_{ij}I \quad \mbox{for } i \neq j.
\ee
Therefore, we can represent JOR iterative matrix $\MM$  in similar manner, i.e., $\MM=[M_{ij}]$ where 
\be \label{85} M_{ii}=q D_{ii}^{-1}G_{ii}+(1-q)I, \quad 
M_{ij}=q \theta w_{ij} D_{ii}^{-1} \quad \mbox{for} \quad i \neq j, \ee
and  $\pp=(p_1;...;p_N)$ is calculated as
\be \label{86} p_i=q D_{ii}^{-1}B_{ii} b_i. \ee
Thus, each node $i$ can update its own vector $x_i$ by
\be \label{inner} x_i^{k+1}=\sum_{j \in \bar{O}_i}M_{ij} x^k_j+p_i.\ee
Notice that (\ref{inner}) requires only the neighbouring $ x_j^k, $ i.e. the method is fully distributed. The iterative matrix $\MM$ depends on the penalty parameter $\theta,$ so the JOR parameter $q$ needs to be updated for each value of the penalty parameter. Let us now estimate the interval stated in Lemma \ref{L1}. 
We have 
$$\rho(\DD^{-1}\AAA) \leq \|\DD^{-1}\AAA\| \leq \|\DD^{-1}\|\|\AAA\|.$$
Since the diagonal elements of $B_{ii}$ are positive  and $ \DD $ is the diagonal matrix with elements $ d_{ii} = b_{ii} + \theta \ell_{ii}, i=1,\ldots, nN, $ with $ \LL =[\ell_{ij}] \in \mathbb{R}^{nN \times nN},$ we can upper bound the norm of $ \DD^{-1} $ as follows
$$\|\DD^{-1}\|\leq \frac{1}{\theta(1-\bar{w})},$$
where $\bar{w}:=\max_i w_{ii}<1$.  On the other hand, 
$$\|\AAA\|\leq \|\BB\|+2 \theta \leq  \max_i l_i+2 \theta:=L+2 \theta, $$
where $l_i$ is the largest eigenvalue of $B_{ii}$. 
So, the convergence interval for the relaxation parameter can be set as 
\be \label{88} q\in (0,\frac{2\theta(1-\bar{w})}{L+2 \theta}).\ee
Alternatively, one can use the infinity norm and obtain a bound as above with $\bar{B}:=\max_{i} \|B_{ii}\|_{\infty}$ instead of $L$. The iterative matrix depends on the penalty parameter and thus (\ref{88}) can be updated for each penalty subproblem, defined with a new parameter. However the  upper bound in (\ref{88})  is monotonically increasing with respect to $\theta$, so one can set $q \in (0,2 \theta_0 (1-\bar{w})/ (L+2 \theta_0))$ without updating  with the change of $ \theta. $ In the test presented in Section 5  we  use  $\theta_0=2 L$, which further implies that the JOR parameter can be fixed to any positive value smaller than $4(1-\bar{w})/5$. 

 The globally convergent algorithm for problem \eqref{51} with quadratic functions \eqref{81} is given below. In each subproblem we have to solve a linear system of type (\ref{83}). The algorithm is designed such that these linear systems are solved within an inner loop defined by (\ref{inner}).  The penalty parameters $ \{\theta_{s}\} $ with the property $ \theta_{s} \to \infty, \; s \to \infty, $ and the number of inner iterations $ k(s) $ of type (\ref{inner}) are assumed to be given. Also, we assume that the relaxation parameters $ q(s) $ are  defined  by a rule that fulfills (\ref{88}). Thus, for given $ \theta_{s} $ the linear system $ \AAA(\theta_{s}) \mathbf{x} = \mathbf{c} $ is solved approximately in each outer iteration, with the iterative matrix 
 $$ \MM(\theta_{s}) = q(s) \DD^{-1} \GG + (1-q(s))\II. $$ The global constants $ L $ and $ \bar{w}$ are needed for updating the relaxation parameter in each iteration but the nodes can settle them through initial communication at the beginning of iterative process. Thus, they are also treated as input parameters for the algorithm. 

\noindent{\bf Algorithm EFIX-Q}.  

 Given: $\{\theta_{s}\}, $ $ x_i^0 \in \RR^{n}, i=1,...,N$, $\{k(s)\} \subset \mathbb{N},  L, \bar{w}. $  Set $s=0$. 
\begin{itemize}
\item[S1] Set $ k = 0 $ and choose  $q$ according to  \eqref{88} with $\theta=\theta_{s}. $  Let $ \MM = \MM(\theta_{s}), z_i^0 = x_i^{s}, i=1,\ldots,N. $ 
\item[S2] For each $i=1,\ldots, N $ compute the new local solution estimates  
$$z_i^{k+1}=\sum_{j \in \bar{O}_i} M_{ij} z^{k}_j+p_i$$
and set $ k =k+1. $
\item[S3] If $k<k(s)$  go to step S3. Else, set $ \mathbf{x}^{s+1} = (z_1^k,\ldots,z_N^k), \; s=s
+1$ and  go to step S1. 
\end{itemize}

Our analysis relies on the quadratic penalty method, so we state the framework algorithm (see \cite{Nocedal} for example). We assume again that the sequence of penalty parameters $ \{\theta_s\} $ has the property $ \theta_s \to \infty $ and that the tolerance sequence $\{\varepsilon_s\}$  is such that $\varepsilon_s \to 0$.  

\noindent{\bf Algorithm QP}.  

 Given: $\{\theta_{s}\}, \; \{\varepsilon_s\}.$ Set $ s=0. $   
  \begin{itemize}
\item[S1] Find $\mathbf{x}^s$ such that \be \label{68} \|\nabla \Phi_{\theta_{s}}(\mathbf{x}^s)\| \leq \varepsilon_s.\ee 
\item[S2] Set $s=s+1$ and return to S1. 
\end{itemize}

Let us demonstrate that the EFIX-Q fits into the framework of Algorithm QP, that is  given a sequence $\{\varepsilon_s\}$   such that $\varepsilon_s \to 0, $ there exists a proper choice of the sequence  $ \{k(s)\} $ such that  \eqref{68} is satisfied for all penalty subproblems.

\begin{lemma} \label{Lemajmax} Suppose that the assumptions A\ref{A1}-A\ref{B1} are satisfied.  If 
$ \|\nabla \Phi_{\theta_{s}}(\mathbf{x}^s)\| \leq \varepsilon_s $ then $\|\nabla \Phi_{\theta_{s+1}}(\mathbf{x}^{s+1})\| \leq \varepsilon_{s+1}$ for 
\be \label{jmax} k(s)=\Bigg \lceil \Big |\frac{ \log(\mu \varepsilon_{s+1})-\log(L+2 \theta_{s+1})(\varepsilon_s+2\bar{c})}{\log(\rho_{s+1})} \Big |\Bigg \rceil,\ee 
where $ \rho_{s+1} $ is a constant such that $\|\MM(\theta_{s+1})\|\leq \rho_{s+1}<1$ and $\bar{c}=\|\mathbf{c}\|$.
\end{lemma}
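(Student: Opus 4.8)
# Proof Proposal for Lemma 3.2

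The plan is to bound the gradient norm $\|\nabla \Phi_{\theta_{s+1}}(\mathbf{x}^{s+1})\|$ by tracking how far the JOR iterates move the starting point toward the solution of the $(s{+}1)$-st penalty subproblem, and then choosing the number of inner iterations $k(s)$ large enough to drive that quantity below $\varepsilon_{s+1}$. The key observation is that $\mathbf{x}^{s+1}$ is produced by running $k(s)$ steps of the JOR iteration \eqref{8}–\eqref{9} associated with the matrix $\AAA(\theta_{s+1})=\BB+\theta_{s+1}\LL$ and right-hand side $\mathbf{c}$, starting from $\mathbf{x}^s$. Since JOR is a fixed-point iteration whose unique fixed point $\mathbf{x}^*_{s+1}$ solves $\AAA(\theta_{s+1})\mathbf{x}=\mathbf{c}$, i.e. $\nabla \Phi_{\theta_{s+1}}(\mathbf{x}^*_{s+1})=0$, the standard linear-solver error bound gives
\[
\|\mathbf{x}^{s+1}-\mathbf{x}^*_{s+1}\| \le \|\MM(\theta_{s+1})\|^{k(s)}\,\|\mathbf{x}^s-\mathbf{x}^*_{s+1}\| \le \rho_{s+1}^{\,k(s)}\,\|\mathbf{x}^s-\mathbf{x}^*_{s+1}\|.
\]

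Next I would convert this iterate-error bound into a gradient bound. Because $\nabla \Phi_{\theta_{s+1}}(\mathbf{x})=\AAA(\theta_{s+1})(\mathbf{x}-\mathbf{x}^*_{s+1})$ and the gradient vanishes at $\mathbf{x}^*_{s+1}$, we have
\[
\|\nabla \Phi_{\theta_{s+1}}(\mathbf{x}^{s+1})\| = \|\AAA(\theta_{s+1})(\mathbf{x}^{s+1}-\mathbf{x}^*_{s+1})\| \le \|\AAA(\theta_{s+1})\|\,\rho_{s+1}^{\,k(s)}\,\|\mathbf{x}^s-\mathbf{x}^*_{s+1}\|.
\]
Here the earlier estimate $\|\AAA\| \le L+2\theta$ (derived in the paragraph preceding Lemma \ref{L1}) supplies $\|\AAA(\theta_{s+1})\| \le L+2\theta_{s+1}$, which accounts for the $L+2\theta_{s+1}$ factor appearing in \eqref{jmax}.

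The remaining task is to control the initial error $\|\mathbf{x}^s-\mathbf{x}^*_{s+1}\|$ using the hypothesis $\|\nabla \Phi_{\theta_s}(\mathbf{x}^s)\|\le\varepsilon_s$, and this is where I expect the main obstacle to lie, since the hypothesis concerns the gradient at parameter $\theta_s$ while the target subproblem uses $\theta_{s+1}$. The strategy is to pass through $\mathbf{x}^*_{s+1}$: write $\|\mathbf{x}^s-\mathbf{x}^*_{s+1}\|\le \|\mathbf{x}^s-\mathbf{x}^*_s\|+\|\mathbf{x}^*_s-\mathbf{x}^*_{s+1}\|$ or, more directly, use strong convexity of $\Phi_{\theta_{s+1}}$ with modulus at least $\mu$ (the minimal eigenvalue of $\AAA$ is bounded below by $\mu$, as noted after Assumption A\ref{B1}) to get $\|\mathbf{x}^s-\mathbf{x}^*_{s+1}\|\le \mu^{-1}\|\nabla \Phi_{\theta_{s+1}}(\mathbf{x}^s)\|$. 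I would then bound $\|\nabla \Phi_{\theta_{s+1}}(\mathbf{x}^s)\|$ by $\|\nabla \Phi_{\theta_s}(\mathbf{x}^s)\|+\tfrac{1}{2}|\theta_{s+1}-\theta_s|\,\|(\LL+\LL^T)\mathbf{x}^s\|$ or a similar comparison, using $\nabla\Phi_\theta(\mathbf{x})=\BB\mathbf{x}-\mathbf{c}+\theta\LL\mathbf{x}$; combined with $\|\nabla \Phi_{\theta_s}(\mathbf{x}^s)\|\le\varepsilon_s$ and a bound on the penalty-gradient contribution in terms of $\bar{c}=\|\mathbf{c}\|$, this yields the composite factor $(\varepsilon_s+2\bar{c})$ in \eqref{jmax}. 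Finally, demanding
\[
(L+2\theta_{s+1})\,\rho_{s+1}^{\,k(s)}\,\frac{\varepsilon_s+2\bar{c}}{\mu}\le \varepsilon_{s+1}
\]
and solving for $k(s)$ by taking logarithms (noting $\log\rho_{s+1}<0$, which is why the absolute value and ceiling appear) produces exactly the formula \eqref{jmax}. The delicate point throughout is ensuring every constant is genuinely uniform and that the $\mu$ lower bound on the spectrum legitimately converts the gradient hypothesis into an iterate bound across the change of penalty parameter.
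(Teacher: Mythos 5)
Your outline follows the paper's argument almost step for step: the JOR contraction $\|\mathbf{z}^{k}-\mathbf{x}^*_{\theta_{s+1}}\|\le\rho_{s+1}^{k}\|\mathbf{x}^{s}-\mathbf{x}^*_{\theta_{s+1}}\|$, the conversion to a gradient bound via $\nabla\Phi_{\theta_{s+1}}(\mathbf{x})=\AAA(\theta_{s+1})(\mathbf{x}-\mathbf{x}^*_{\theta_{s+1}})$ and $\|\AAA(\theta_{s+1})\|\le L+2\theta_{s+1}$, the use of $\mu\II\preceq\AAA$ to turn the hypothesis into $\|\mathbf{x}^{s}-\mathbf{x}^*_{\theta_{s}}\|\le\varepsilon_s/\mu$, and the final logarithm. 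The one place you leave genuinely unresolved is exactly the place you flag: where the $2\bar c$ comes from. The paper settles it through your first option (the triangle inequality $\|\mathbf{x}^{s}-\mathbf{x}^*_{\theta_{s+1}}\|\le\|\mathbf{x}^{s}-\mathbf{x}^*_{\theta_{s}}\|+\|\mathbf{x}^*_{\theta_{s}}-\mathbf{x}^*_{\theta_{s+1}}\|$) combined with the observation that $\mathbf{x}^*_{\theta}=\AAA^{-1}(\theta)\mathbf{c}$ satisfies $\|\mathbf{x}^*_{\theta}\|\le\|\AAA^{-1}(\theta)\|\,\bar c\le\bar c/\mu$ \emph{uniformly in} $\theta$ (because $\AAA(\theta)\succeq\BB\succeq\mu\II$ and the right-hand side $\mathbf{c}=\BB\mathbf{b}$ does not depend on $\theta$); hence $\|\mathbf{x}^*_{\theta_{s}}-\mathbf{x}^*_{\theta_{s+1}}\|\le 2\bar c/\mu$, which together with $\varepsilon_s/\mu$ gives the factor $(\varepsilon_s+2\bar c)/\mu$.

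Be aware that the alternative you describe as ``more direct'' would not deliver the stated constant. Writing $\nabla\Phi_{\theta_{s+1}}(\mathbf{x}^{s})=\nabla\Phi_{\theta_{s}}(\mathbf{x}^{s})+(\theta_{s+1}-\theta_{s})\LL\mathbf{x}^{s}$ leaves you with the term $(\theta_{s+1}-\theta_{s})\|\LL\mathbf{x}^{s}\|$, and there is no a priori reason this is bounded by $2\bar c$: the best available control on $\|\LL\mathbf{x}^{s}\|$ is of order $1/\theta_{s}$, so the term scales like $(\theta_{s+1}-\theta_{s})/\theta_{s}$, which is unbounded for the growth rates of $\theta_{s}$ actually used (e.g.\ $\theta_{s+1}=(s+1)\theta_{s}$). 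So you should commit to the triangle-inequality route and supply the uniform bound on $\|\mathbf{x}^*_{\theta}\|$; with that single addition your argument closes and coincides with the paper's proof.
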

\begin{proof} Notice that $\AAA(\theta)$ is positive definite for all $\theta>0$ and thus there exists an unique stationary point $\mathbf{x}^*_{\theta}$ of $\nabla \Phi_{\theta}$, i.e., an unique solution of $\AAA(\theta) \mathbf{x}=\mathbf{c}$.  With notation $ \mathbf{z}^k = (z_1^k; \ldots;z_N^k), \mathbf{z}^0 = \mathbf{x}^s, $ we have 
\begin{eqnarray} \label{100}
\|\nabla \Phi_{\theta_{s+1}}(\mathbf{z}^{k})\|&=& \|\nabla \Phi_{\theta_{s+1}}(\mathbf{z}^{k})-\nabla \Phi_{\theta_{s+1}}(\mathbf{x}^*_{\theta_{s+1}})\| \\ \nonumber
&\leq & \|\AAA(\theta_{s+1})\|\|\mathbf{z}^{k}-\mathbf{x}^*_{\theta_{s+1}}\|\\ \nonumber
&\leq & (L+2 \theta_{s+1})\|\mathbf{z}^{k}-\mathbf{x}^*_{\theta_{s+1}}\| \\ \nonumber
&\leq & (L+2 \theta_{s+1})\rho^{k}_{s+1} \|\mathbf{x}^{s}-\mathbf{x}^*_{\theta_{s+1}}\|
\\ \nonumber
&\leq & (L+2 \theta_{s+1})\rho^{k}_{s+1} (\|\mathbf{x}^{s}-\mathbf{x}^*_{\theta_{s}}\|+\|\mathbf{x}^*_{\theta_{s}}-\mathbf{x}^*_{\theta_{s+1}}\|).
\end{eqnarray}
Let us now estimate the norms  in the final inequality. First, notice  that 
$$\nabla \Phi_{\theta_{s}}(\mathbf{x}^{s})=\nabla \Phi_{\theta_{s}}(\mathbf{x}^{s})-\nabla \Phi_{\theta_{s}}(\mathbf{x}^*_{\theta_{s}})=\AAA(\theta_{s})(\mathbf{x}^{s}-\mathbf{x}^*_{\theta_{s}}).$$
Thus, since $\mu \II \preceq \AAA(\theta_{s})$ we obtain 
\be \label{101} \|\mathbf{x}^{s}-\mathbf{x}^*_{\theta_{s}}\|\leq \|\AAA^{-1}(\theta_{s})\|\|\nabla \Phi_{\theta_{s}}(\mathbf{x}^{s})\|\leq \frac{\varepsilon_s}{\mu}.\ee
Moreover, for any $\theta$ we have 
\be \label{102} \|\mathbf{x}^*_{\theta}\|\leq \|\AAA^{-1}(\theta)\|\|\mathbf{c}\|\leq \frac{\bar{c}}{\mu}.\ee
Putting \eqref{101} and \eqref{102} into \eqref{100} we obtain 
$$ \|\nabla \Phi_{\theta_{s+1}}(\mathbf{z}^{k})\| \leq \frac{(L+2 \theta_{s+1})\rho^{k}_{s+1} (\varepsilon_s+2 \bar{c})}{\mu}. $$
Imposing the inequality 
$$\frac{(L+2 \theta_{s+1})\rho^{k}_{s+1} (\varepsilon_s+2 \bar{c})}{\mu}\leq \varepsilon_{s+1},$$
and then applying the logarithm and rearranging,  we obtain that $\|\nabla \Phi_{\theta_{s+1}}(\mathbf{z}^{k})\| \leq \varepsilon_{s+1}$ for all 
$k \geq k(s)$ defined by \eqref{jmax}. Therefore, for $ \mathbf{z}^{k(s)} = \mathbf{x}^{s+1} $  we get the statement. 
\end{proof}

The previous lemma shows that EFIX-Q fits into the framework of quadratic penalty  methods presented above  if we assume $\varepsilon_s \to 0$ and  set $k(s)$ as in \eqref{jmax}, with $ \{\mathbf{x}^s\} $ being the outer iterative sequence of Algorithm EFIX-Q.  Notice that the inner iterations (that rely on JOR method)  stated in steps S2-S3 of EFIX-Q can be replaced with any solver of linear  systems  or any optimizer of quadratic objective function which can be implemented in decentralized manner and exhibits  linear convergence with factor $\rho_s$. Moreover,  it is enough to apply a  solver with R-linear convergence, i.e., any solver  that satisfies
$$\|\mathbf{z}^{k}-\mathbf{x}^*_{\theta_{s+1}}\| \leq  C_{s+1} \|\mathbf{x}^{s}-\mathbf{x}^*_{\theta_{s+1}}\| \rho^{k}_{s+1},$$
where $C_{s+1}$ is a positive constant. In this case,  the slightly modified $k(s)$  with $(L+2 \theta_{s+1})$ multiplied with $C_{s+1}$ in \eqref{jmax} fits the proposed framework. 

Although the LICQ does not hold for (\ref{52}), following the steps of the standard proof and modifying it to cope with LICQ violation, we obtain the global convergence result presented below. 


\begin{te} \label{exactconvQ} Suppose that the assumptions A\ref{A1}-A\ref{B1} are satisfied. Assume that $\varepsilon_s \to 0$ and  $k(s)$ is defined by  \eqref{jmax}. Let  $\{\mathbf{x}^{s}\} $ be a sequence generated by algorithm EFIX-Q. Then, either the sequence   $\{\mathbf{x}^{s}\} $ is unbounded or it converges to  a solution $\mathbf{x}^*$ of the problem \eqref{52}     and   $x_i^*$ is the solution of problem \eqref{51} for every $i=1,...,N$.
\end{te}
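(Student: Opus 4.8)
The plan is to invoke the quadratic penalty framework established in Lemma \ref{Lemajmax}: since $\varepsilon_s \to 0$ and $k(s)$ is chosen according to \eqref{jmax}, the outer iterates of EFIX-Q satisfy the approximate stationarity condition \eqref{68}, i.e.
$$\|\nabla \Phi_{\theta_{s}}(\mx^s)\| = \|\AAA(\theta_s)\mx^s - \mc\| \le \varepsilon_s \to 0.$$
Because the statement is a dichotomy, I only need to treat the case in which $\{\mx^s\}$ is bounded. I would then extract an arbitrary convergent subsequence $\mx^{s_k} \to \bar{\mx}$, prove that any such limit $\bar{\mx}$ is feasible for and solves \eqref{52}, and finally use uniqueness of the solution of \eqref{52} to upgrade subsequential convergence to convergence of the whole sequence.

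First I would establish feasibility, i.e. $\LL \bar{\mx} = 0$. Writing $\mathbf r^s := \AAA(\theta_s)\mx^s - \mc = \BB\mx^s + \theta_s \LL \mx^s - \mc$ with $\|\mathbf r^s\| \le \varepsilon_s$, and dividing by $\theta_s$,
$$\LL \mx^s = \frac{1}{\theta_s}\big(\mathbf r^s + \mc - \BB \mx^s\big).$$
Since $\{\mx^s\}$ is bounded, $\varepsilon_s \to 0$ and $\theta_s \to \infty$, the right-hand side tends to zero along the subsequence, so $\LL \bar{\mx}=0$; equivalently $\bar{\mx}$ lies in the consensus subspace $S=\{\mx:\,x_1=\dots=x_N\}$, which is exactly the feasible set of \eqref{52} (the null space of $(\II-\WW)^{1/2}=\LL^{1/2}$).

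The delicate point, and the place where the LICQ violation must be circumvented, is proving optimality. Instead of forming multiplier estimates $\theta_s\LL\mx^s$, which need not converge precisely because the constraint Jacobian $\LL^{1/2}$ is rank-deficient, I would project the stationarity relation onto the null space of $\LL$. Let $P$ denote the orthogonal projector onto $\ker\LL=S$. Since $\LL$ is symmetric positive semidefinite, $\mathrm{range}(\LL)=S^\perp$ and hence $P\LL=0$. Using $\nabla F(\mx)=\BB\mx-\mc$, the stationarity relation reads $\nabla F(\mx^s)=\mathbf r^s-\theta_s\LL\mx^s$, so applying $P$ annihilates the diverging penalty term:
$$P\nabla F(\mx^s)=P\mathbf r^s,\qquad \|P\nabla F(\mx^s)\|\le \varepsilon_s\to 0.$$
Passing to the limit gives $P\nabla F(\bar{\mx})=0$, i.e. $\nabla F(\bar{\mx})\perp S$. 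Together with $\bar{\mx}\in S$ and convexity of $F$, this is the first-order optimality condition for minimizing $F$ over the subspace $S$, so $\bar{\mx}$ solves \eqref{52}.

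Finally I would close the argument: on $S$ the restriction of $F$ equals $f=\sum_i f_i$, which is $\mu$-strongly convex, so \eqref{52} has a unique solution $\mx^*$. Every convergent subsequence of the bounded sequence therefore has the same limit $\mx^*$, which forces $\mx^s\to\mx^*$. Writing $\mx^*=(x_1^*;\dots;x_N^*)$, feasibility gives $x_1^*=\dots=x_N^*=:y^*$ and optimality over $S$ makes $y^*$ the minimizer of $f$, so each $x_i^*=y^*$ solves \eqref{51}. I expect the projection step to be the main obstacle and the crux of the whole proof: it is what replaces the multiplier-convergence argument of the classical penalty theory and makes the result go through despite LICQ failing for \eqref{52}.
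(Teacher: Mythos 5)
Your proposal is correct, and for the crucial optimality step it takes a genuinely different route from the paper. Both proofs handle boundedness, feasibility ($\LL\bar{\mathbf{x}}=0$ by dividing the residual identity by $\theta_s$), and the final uniqueness-of-limit argument in essentially the same way. Where you diverge is in certifying optimality of an accumulation point: the paper stays inside the classical penalty/KKT framework, defining multiplier estimates $\lambda_s=\theta_s\LL^{1/2}\mathbf{x}^s$, proving they are bounded along the subsequence via an eigenvalue decomposition of $\LL$ (the step that must be handled carefully precisely because LICQ fails and $\LL^{1/2}$ is rank-deficient), extracting a convergent sub-subsequence $\lambda_s\to\lambda^*$, and passing to the limit in $\nabla F(\mathbf{x}^s)+\LL^{1/2}\lambda_s=\nabla\Phi_{\theta_s}(\mathbf{x}^s)$ to obtain a KKT point. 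You instead apply the orthogonal projector $P$ onto $\ker\LL$ to the stationarity relation; since $\LL$ is symmetric PSD, $P\LL=0$ kills the divergent penalty term exactly, giving $\|P\nabla F(\mathbf{x}^s)\|\le\varepsilon_s\to 0$ and hence $\nabla F(\bar{\mathbf{x}})\perp\ker\LL$, which together with feasibility and convexity is the first-order condition for minimizing $F$ over the consensus subspace. Your argument is more elementary and arguably more robust: it bypasses multipliers entirely, needs no boundedness or subsequence extraction for $\lambda_s$, and yields a quantitative residual bound $\|P\nabla F(\mathbf{x}^s)\|\le\varepsilon_s$ that could be reused for rate estimates. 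What the paper's route buys in exchange is the explicit construction of a Lagrange multiplier $\lambda^*$ and a KKT certificate, which connects the result to the standard quadratic-penalty theory it is deliberately mirroring. Both arguments are valid; the one minor caveat, shared equally by your write-up and the paper, is that Lemma \ref{Lemajmax} only provides the induction step for \eqref{68}, so the base case $s=0$ is implicitly assumed.
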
 
\begin{proof} Assume that  $\{\mathbf{x}^{s}\} $ is bounded and  consider the problem \eqref{52}, i.e., 
$$\min F(\mathbf{x}), \mbox{ s.t. } h(\mathbf{x}) = 0$$
where $$h(\mathbf{x})=\LL^{1/2} \mathbf{x}. $$
Let $\mathbf{x}^*$ be an arbitrary accumulation point of the bounded sequence $\{\mathbf{x}^{s}\} $ generated by algorithm EFIX-Q, i.e., let 
$$\lim_{s \in K_1} \mathbf{x}^{s}=\mathbf{x}^*.$$
The inequality \eqref{68} implies 
\be \label{69a} \theta_{s} \|\nabla^{T} h(\mathbf{x}^{s}) h(\mathbf{x}^{s})\|-\|\nabla F(\mathbf{x}^s)\| \leq \varepsilon_s.\ee
Since $\nabla^{T} h(\mathbf{x}^s)=(\LL^{1/2})^{T}=\LL^{1/2}$, we obtain 
$$\nabla^{T} h(\mathbf{x}^s) h(\mathbf{x}^s)=\LL \mathbf{x}^s,$$
and  \eqref{69a} implies
\be \label{70a} \|\LL \mathbf{x}^s\| \leq \frac{1}{\theta_{s}} (\|\nabla F(\mathbf{x}^s)\|  + \varepsilon_s).\ee
Taking the limit over $K_1$ we have $\LL \mathbf{x}^*=0$, i.e., $h(\mathbf{x}^*)=0$, so $\mathbf{x}^*$ is a feasible point. Therefore $\WW \mathbf{x}^*=\mathbf{x}^*$, or equivalently $x_1^*=x_2^*=...=x_N^*$, so the consensus is achieved.

Now, we prove that $\mathbf{x}^*$ is an optimal point of problem \eqref{52}. Let us define $\mathbf{\lambda}_s:=\theta_{s} h(\mathbf{x}^s)$. Considering the gradient of the penalty function we obtain 
\be \label{71na} \nabla \Phi_{\theta_{s}}(\mathbf{x}^s)=\nabla F(\mathbf{x}^s)+\theta_{s} \LL \mathbf{x}^{s}=\nabla F(\mathbf{x}^s)+\LL^{1/2}\mathbf{\lambda}_s.\ee
Since $\mathbf{x}^s \to \mathbf{x}^*$ over $K_1$ and $\varepsilon_s \to 0$, from  \eqref{70a} we conclude that $\mathbf{\zeta}_s:=\theta_{s} \LL \mathbf{x}^{s}$ must be bounded over $K_1$.  Therefore, $\mathbf{\lambda}_s=\theta_{s} \LL^{1/2} \mathbf{x}^{s}$ is also bounded over $K_1$  and thus,  there exist $K_2 \subseteq K_1$ and $\mathbf{\lambda}^*$ such that 
\be \label{72a} \lim_{s \in K_2} \mathbf{\lambda}_s = \mathbf{\lambda}^*. \ee 
Indeed, by  the eigenvalue decomposition, we obtain 
$\LL=\UU \VV \UU^T,$ where $\UU$ is an unitary matrix and $\VV$ is the diagonal matrix with eigenvalues of $\LL$. Let us denote them by $v_i$. The matrix is positive semidefinite, so $v_i\geq 0$ for all $i$ and we also know that $\LL^{1/2}=\UU \VV^{1/2} \UU^T$.  Since $\mathbf{\zeta}_s$ is bounded over $K_1$, the same is true for the sequence $\UU^T \mathbf{\zeta}_s=\VV \theta_{s} \UU^T \mathbf{x}^s:=\VV \nu^s.$
Consequently, all the components $v_i [\nu^s]_i$ are bounded over $K_1$ and the same is true for $\sqrt{v_i} [\nu^s]_i$. By unfolding we get that $\VV^{1/2} \theta_{s} \UU^T \mathbf{x}^s$ is bounded over $K_1$ and thus the same holds for 
$$\UU \VV^{1/2} \theta_{s} \UU^T \mathbf{x}^s=\theta_{s} \LL^{1/2} \mathbf{x}^s=\mathbf{\lambda}_s.$$
Now, using \eqref{72a} and  taking the limit over $K_2$ in \eqref{71na}  we get 
$$  0=\nabla F(\mathbf{x}^*)+\LL^{1/2} \mathbf{\lambda}^*, $$
i.e., $\nabla F(\mathbf{x}^*)+\nabla^{T} h(\mathbf{x}^*) \mathbf{\lambda}^*=0, $ which means that $\mathbf{x}^*$ is a KKT point of problem \eqref{52} with $\mathbf{\lambda}^*$ being the corresponding Lagrange multiplier. Since $F$ is assumed to be strongly  convex, $\mathbf{x}^*$  is also a solution of the problem \eqref{52}. Finally, notice that  $x_i^*$ is a solution of the problem \eqref{51} for any given node  $i=1,...,N$.

We have just proved that, for an arbitrary $i$, every accumulation point of the sequence $\{x_i^{s}\} $ is the solution of problem \eqref{51}. Since the function $f$ is strongly convex, the solution of problem \eqref{51} must be unique. So, assuming that there exist accumulation points $\mathbf{x}^*$ and $\tilde{\mathbf{x}}$ such that $\mathbf{x}^* \neq \tilde{\mathbf{x}}$ yields contradiction. Therefore we conclude that all the accumulation points must be the same, i.e., the sequence $\{\mathbf{x}^{s}\} $ converges.  This completes the proof.
\end{proof}

The previous theorem states that the only requirement on $\{\varepsilon_s\}$ is that it is a positive sequence that tends to zero. On the other hand, quadratic penalty function is not  exact penalty function and the solution $\mathbf{x}^*_{\theta}$ of the penalty problem \eqref{53} 
is only an approximation of the solution $y^*$ of problem 
\eqref{51}. Moreover, it is known (see Corollary 9 in \cite{yuan}) that for every $i=1,...,N,$ there holds 
 $$e^1_{i, \theta}:=\|x^*_{i,\theta}-y^*\|=\mathcal{O}(\theta^{-1}).$$ More precisely, denoting by $\lambda_2$ the second largest eigenvalue of $W$ in modulus, we have 
 \be \label{105} e^1_{i, \theta_{s}}\leq \frac{L J}{\theta_{s}\kappa(1-\lambda_2)}\sqrt{4-2 \kappa \theta_{s}^{-1}}+\frac{J}{\theta_{s} (1-\lambda_2)},\ee
 where $\kappa=\mu L/(\mu+L)$ and $J=\sqrt{2 L f(0)}$  since the optimal value of each local cost function is zero. Thus, looking at an arbitrary node $i$ and any outer iteration $s$ we have 
 \be \label{106} \|x_i^s-y^*\|\leq \|x_i^s- x^*_{i,\theta_{s}}\|+ \|x^*_{i,\theta_{s}}-y^*\|:=e^2_{i, \theta_{s}}+e^1_{i, \theta_{s}}.\ee
So, there is no need to solve the   penalty subproblem with more accuracy than $ e^1_{i, \theta}$ -  the accuracy of approximating the  original problem. Therefore, using \eqref{101} and \eqref{105} and balancing these two error bounds we conclude that a suitable value for  $\varepsilon_s,$  see  \eqref{101}, can be estimated as  
\be \label{107} \varepsilon_s=\mu\Bigg(\frac{L J}{\theta_{s}\kappa(1-\lambda_2)}\sqrt{4-2 \kappa \theta_{s}^{-1}}+\frac{J}{\theta_{s} (1-\lambda_2)}\Bigg)\ee
Similar idea of error balance is used in  \cite{Nedickaskadni}, to decide when to decrease the step size. 

Assume that we define $\varepsilon_s$ as in \eqref{107} Together with \eqref{101} we get  
$$ \|x_i^s- x^*_{i,\theta_{s}}\|={\cal{O}}\left(\frac{1}{\theta_{s}}\right). $$
Furthermore, using  \eqref{105} and \eqref{106} we obtain
$$  \|x_i^s-y^*\|={\cal{O}}\left(\frac{1}{\theta_{s}}\right). $$
 Therefore, the following result concerning the outer iterations holds. 
 \begin{prop} \label{complex1} Suppose that the assumptions of Theorem \ref{exactconvQ} hold and that $\varepsilon_s$ is defined by \eqref{107}. Let   $\{\mathbf{x}^{s}\} $ be a bounded sequence generated by  EFIX-Q . Then for  every $i=1,...,N$ there holds 
 $$ \|x_i^s-y^*\|={\cal{O}}\left(\frac{1}{\theta_{s}}\right). $$
 \end{prop}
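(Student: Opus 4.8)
The plan is to establish the $\mathcal{O}(1/\theta_s)$ rate by splitting the total error through the triangle inequality \eqref{106}, which already separates $\|x_i^s-y^*\|$ into the \emph{inner} optimization error $e^2_{i,\theta_s}=\|x_i^s-x^*_{i,\theta_s}\|$ and the \emph{penalty} approximation error $e^1_{i,\theta_s}=\|x^*_{i,\theta_s}-y^*\|$. It then suffices to show that each of these two terms is $\mathcal{O}(1/\theta_s)$; summing the two bounds immediately gives the claim. The whole argument is a direct assembly of the estimates \eqref{101}, \eqref{105} and \eqref{107} that are already in place, so no new machinery is required.

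For the penalty error $e^1_{i,\theta_s}$ I would quote the bound \eqref{105} directly. Its second summand is exactly $J/(\theta_s(1-\lambda_2))=\mathcal{O}(1/\theta_s)$, and its first summand carries the same $1/\theta_s$ prefactor multiplied by $\sqrt{4-2\kappa\theta_s^{-1}}$; since $\theta_s\to\infty$ this radical is bounded above by $2$ (and its argument remains nonnegative for all $s$ large enough that $\theta_s\geq\kappa/2$), so the first summand is $\mathcal{O}(1/\theta_s)$ as well. Hence $e^1_{i,\theta_s}=\mathcal{O}(1/\theta_s)$.

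For the inner error $e^2_{i,\theta_s}$ I would first note that, since $x_i^s-x^*_{i,\theta_s}$ is a block of the full vector $\mathbf{x}^s-\mathbf{x}^*_{\theta_s}$, we have $e^2_{i,\theta_s}\leq\|\mathbf{x}^s-\mathbf{x}^*_{\theta_s}\|$. Under the standing hypotheses the iterate produced by EFIX-Q meets the QP tolerance $\|\nabla\Phi_{\theta_s}(\mathbf{x}^s)\|\leq\varepsilon_s$ (Lemma \ref{Lemajmax} with the choice \eqref{jmax}), so estimate \eqref{101} applies and yields $e^2_{i,\theta_s}\leq\varepsilon_s/\mu$. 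Substituting the prescribed tolerance \eqref{107} for $\varepsilon_s$, the factor $\mu$ cancels and $\varepsilon_s/\mu$ becomes precisely the right-hand side of \eqref{105}; thus $e^2_{i,\theta_s}$ obeys the very same $\mathcal{O}(1/\theta_s)$ bound just established for $e^1_{i,\theta_s}$. This cancellation is not accidental — it is exactly the error-balancing motivation behind the definition \eqref{107}.

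Combining the two estimates through \eqref{106} gives $\|x_i^s-y^*\|\leq e^2_{i,\theta_s}+e^1_{i,\theta_s}=\mathcal{O}(1/\theta_s)$ for every $i$, as desired. The only genuine subtlety — and it is a mild one — is the boundedness of the radical $\sqrt{4-2\kappa\theta_s^{-1}}$ appearing in \eqref{105}: one must record that it stays bounded (indeed below $2$) as $s\to\infty$, so that it does not inflate the rate, and that the quantity under the root remains nonnegative along the sequence. Everything else is a mechanical substitution of the already-derived bounds.
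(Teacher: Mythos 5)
Your proposal is correct and follows essentially the same route as the paper: the paper likewise combines the triangle inequality \eqref{106} with the bound $\|x_i^s-x^*_{i,\theta_s}\|\leq\varepsilon_s/\mu$ from \eqref{101}, the choice \eqref{107} of $\varepsilon_s$, and the penalty-error estimate \eqref{105}. Your extra remark about the boundedness of the radical $\sqrt{4-2\kappa\theta_s^{-1}}$ is a small point the paper leaves implicit (it is made explicit only in the proof of Corollary \ref{complex2}, where the radical is bounded by $2$), but it does not change the argument.
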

 The complexity result stated below  for the special  choice of penalty parameters, $ \theta_s = s $ can be easily derived using the above Proposition.

 \begin{cor} \label{complex2} Suppose that the assumptions of Proposition \ref{complex1} hold and $\theta_{s}=s$ for $s = 1,2,\ldots$. 
 Then after at most 
 $$\bar{s}=\left\lceil \frac{2J(3+2L/\mu)}{(1-\lambda_2)}\epsilon^{-1}\right\rceil$$
 iterations we have $\|x_i^{\bar{s}}-y^*\|\leq \epsilon$ for all $i=1,...,N$ and any $\epsilon>0$, where $J$ and $\lambda_2$ are as in \eqref{105}.
 \end{cor}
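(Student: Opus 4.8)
The plan is to derive the corollary directly from Proposition \ref{complex1} by plugging in the specific choice $\theta_s = s$ into the asymptotic bound and then making the hidden constant explicit. Since Proposition \ref{complex1} gives $\|x_i^s - y^*\| = \mathcal{O}(1/\theta_s)$, the first step is to go back to the explicit bound \eqref{105} combined with \eqref{101} and \eqref{107} that underlies the proposition, rather than the cleaned-up $\mathcal{O}$ statement, because the corollary asks for an explicit iteration count and therefore needs the constant in front of $1/\theta_s$.

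First I would write out the full upper bound on $\|x_i^s - y^*\|$ using \eqref{106}. The term $e^1_{i,\theta_s}$ is bounded by \eqref{105}, and the term $e^2_{i,\theta_s} = \|x_i^s - x^*_{i,\theta_s}\|$ is controlled by \eqref{101}, namely $e^2_{i,\theta_s} \le \varepsilon_s/\mu$, which by the balancing choice \eqref{107} equals exactly $e^1_{i,\theta_s}$. Hence $\|x_i^s - y^*\| \le 2\, e^1_{i,\theta_s}$. Substituting \eqref{105} with $\theta_s = s$ gives
\be
\|x_i^s - y^*\| \le 2\left(\frac{L J}{s\,\kappa(1-\lambda_2)}\sqrt{4 - 2\kappa s^{-1}} + \frac{J}{s\,(1-\lambda_2)}\right).
\ee
The next step is to simplify this using the crude but clean bound $\sqrt{4 - 2\kappa s^{-1}} \le 2$, which holds since the argument under the root is at most $4$. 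This yields
\be
\|x_i^s - y^*\| \le \frac{2J}{s\,(1-\lambda_2)}\left(\frac{2L}{\kappa} + 1\right).
\ee
Recalling $\kappa = \mu L/(\mu+L)$, I would compute $2L/\kappa = 2(\mu+L)/\mu = 2 + 2L/\mu$, so that the parenthetical factor becomes $3 + 2L/\mu$, matching the constant appearing in the statement. Thus $\|x_i^s - y^*\| \le \frac{2J(3 + 2L/\mu)}{s\,(1-\lambda_2)}$.

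Finally, to guarantee $\|x_i^{\bar s} - y^*\| \le \epsilon$ it suffices to require the right-hand side at $s = \bar s$ to be at most $\epsilon$, i.e. $\frac{2J(3+2L/\mu)}{\bar s\,(1-\lambda_2)} \le \epsilon$, which rearranges to $\bar s \ge \frac{2J(3+2L/\mu)}{(1-\lambda_2)}\,\epsilon^{-1}$. Taking the ceiling gives precisely the stated $\bar s$, and since the bound on $\|x_i^s - y^*\|$ is decreasing in $s$, the inequality holds for that $\bar s$ and all larger indices, for every node $i$. The main obstacle, which is really just a matter of care rather than depth, is the bookkeeping of constants: making sure the balancing of $e^1$ and $e^2$ via \eqref{107} genuinely produces the factor of $2$, and that the simplification $\sqrt{4 - 2\kappa s^{-1}} \le 2$ is applied consistently so that the final constant collapses exactly to $3 + 2L/\mu$ and not something slightly larger.
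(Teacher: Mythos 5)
Your proposal is correct and follows essentially the same route as the paper: decompose the error via \eqref{106}, use \eqref{101} with the balancing choice \eqref{107} to get the factor of $2$, bound $\sqrt{4-2\kappa\theta_s^{-1}}\le 2$, simplify $2L/\kappa=2(\mu+L)/\mu$ to obtain the constant $3+2L/\mu$, and solve for $s$. The only cosmetic slip is the phrase that $\varepsilon_s/\mu$ ``equals exactly $e^1_{i,\theta_s}$'' --- it equals the \emph{upper bound} on $e^1_{i,\theta_s}$ from \eqref{105}, not the quantity itself --- but this does not affect the argument.
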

 \begin{proof} 

Notice that \eqref{106}, \eqref{105} and \eqref{101} imply  for arbitrary $i$ 
\begin{eqnarray} \nonumber
\|x_i^s-y^*\| &\leq &\frac{\varepsilon_s}{\mu}+e^1_{i, \theta_{s}}\leq 2 \left(\frac{L J}{\theta_{s}\kappa(1-\lambda_2)}\sqrt{4-2 \kappa \theta_{s}^{-1}}+\frac{J}{\theta_{s} (1-\lambda_2)} \right) \\\nonumber
&\leq& \frac{2 J}{\theta_{s}(1-\lambda_2)}\left(\frac{2 (\mu+L)}{\mu}+1\right) \leq \frac{2 J}{\theta_{s}(1-\lambda_2)}(3+2L/\mu).
\end{eqnarray} 
For $\theta_s=s$, the right-hand side of the above inequality is smaller than $\epsilon$  for 
\be \label{complexitys} s\geq \frac{2J(3+2L/\mu)}{(1-\lambda_2)}\epsilon^{-1}\ee which completes the proof.
 \end{proof}
 
Notice that the number of outer iterations $\bar{s}$ to obtain the $\epsilon$-optimal point depends directly on $J$, i.e., on $f(0)$   and the Lipschitz constant $L$. Moreover, it also depends on the network parameters - recall that $\lambda_2$ represents the second largest eigenvalue of the matrix $W$, so the complexity constant can be diminished if we can chose the matrix $W$ such that  $\lambda_2$ is as small as possible for the given network.  
 
\section{EFIX-G: Strongly convex problems}

In this section, we consider strongly convex  local cost functions $f_i$ that are  not necessarily quadratic. The main motivation comes from machine learning problems such as logistic regression where the Hessian is easy to calculate and, under regularization, satisfies Assumption A\ref{B1}. The main idea now is to approximate the objective function with a  quadratic model at each outer iteration $s$ and exploit the previous analysis.  Instead of solving \eqref{68}, we form a quadratic approximation $Q_{s}(\mx)$ of the penalty function $\Phi_{\theta_{s}}(\mx)$ as 
\begin{eqnarray}
\label{201} Q_{s}(\mx)&:=& F(\mx^{s-1})+\nabla^T F(\mx^{s-1})(\mx-\mx^{s-1})+\\ \nonumber &+& \frac{1}{2} (\mx-\mx^{s-1})^T \nabla^2 F(\mx^{s-1})(\mx-\mx^{s-1})+\frac{\theta_{s}}{2} \mx^T \LL \mx
\end{eqnarray} 
and search for $\mx^s$ that satisfies 
\be \label{202} \| \nabla Q_{s} (\mx^s)\| \leq \varepsilon_s.\ee
In other words, we are solving the system of  linear equations 
$$ \AAA_s \mx =\mc_s,  $$
where 
$$  \AAA_s := \nabla^2 F(\mx^{s-1})+\theta_{s}\LL,  $$
$$ \mc_s:= \nabla^2 F(\mx^{s-1}) \mx^{s-1}-\nabla  F(\mx^{s-1}) . $$
Under the stated assumptions, $\AAA_s$ is positive definite with eigenvalues bounded  with $\mu$ from below and the diagonal elements of $\AAA_s$ are strictly positive. Therefore, using the same notation and formulas as in the previous section with $\nabla^2 f_i(x_i^{s-1})$ instead of $B_{ii}$ in \eqref{84} we obtain the same bound for the JOR parameter,  \eqref{88}. 

Before stating the algorithm, we repeat the formulas for completeness.  The matrix $\AAA_s=[A_{ij}]$  has blocks $A_{ij} \in \RR^{n \times n}$ given by 
\be \label{84G} A_{ii}=\nabla^2 f_i(x_i^{s-1})+\theta_s (1-w_{ii})I, \quad \mbox{and} \quad  A_{ij}=-\theta_{s} w_{ij}I \quad \mbox{for } i \neq j.
\ee
The JOR iterative matrix is $\MM_s=[M_{ij}]$ where 
\be \label{85G} M_{ii}=q_s D_{ii}^{-1}G_{ii}+(1-q)I, \quad 
M_{ij}=q_s \theta_{s} w_{ij} D_{ii}^{-1} \quad \mbox{for} \quad i \neq j, \ee
and the vector  $\pp_s=(p_1;...;p_N)$ is calculated as $\pp_s=q \DD_s^{-1} \mc_s$, where $\DD_s$ is a diagonal matrix with $d_{ii}=a_{ii}$ for all $i=1,...,nN$ and   $\GG_s=\DD_s-\AAA_s$, i.e., 
\be \label{86G} p_i=q D_{ii}^{-1}c_i, \quad \mbox{where} \quad c_i=\nabla^2 f_i(x_i^{s-1})x_i^{s-1}-\nabla  f_i(x_i^{s-1}).\ee

The algorithm presented below is a generalization of EFIX-Q and we assume the same initial setup: the global constants $ L $ and $ \bar{w} $ are known, the sequence of penalty parameters $ \{\theta_s\} $ and the sequence of inner iterations counters $ \{k(s)\} $ are input parameters for the algorithm. 

\noindent{\bf Algorithm EFIX-G}.  

 Input: $\{\theta_{s}\}, $ $ x_i^0 \in \RR^{n}, i=1,...,N$, $\{k(s)\} \subset \mathbb{N},  L, \bar{w}. $  Set $s=0$. 
\begin{itemize}
\item[S1] Each node $i$ sets $q$ according to \eqref{88} with $\theta=\theta_{s}. $   
\item[S2] Each node calculates  $\nabla  f_i(x_i^{s})$ and $\nabla^2  f_i(x_i^{s})$. Define $ \MM = \MM_s $ given by (\ref{85G}),  $ z_i^0 = x_i^{s}, i=1,\ldots,N $ and set $k=0$.
\item[S3] For $ i =1,\ldots, N $ update the  solution estimates 
$$z_i^{k+1}=\sum_{j \in \bar{O}_i} M_{ij} z^{k}_j+p_i$$
and set $ k =k+1. $
\item[S4] If $k<k(s)$  go to step S4. Else, set $ \mathbf{x}^{s+1} = (z_1^k;\ldots;z_N^k), \; s=s
+1$ and  go to step Sthe. 
\end{itemize}

The algorithm differs from the quadratic case EFIX-Q in step S2, where the gradients and the Hessians are calculated in a new point at every outer iteration. Following the same ideas as in the proof of Lemma \ref{Lemajmax}, we obtain the similar result under the following additional assumption. 
\begin{ass} \label{B1G}  For each $y \in \mathbb{R}^n$ there holds  $\|\nabla^2 f_i(y)\| \leq l_i$, $i=1,...,N$.
 \end{ass}
 Notice that this assumption implies that $\|\nabla^2 F(\mx)\| \leq L:=\max_{i} l_i$.

\begin{lemma} \label{LemajmaxG} Suppose that Assumptions A\ref{A1}-A\ref{B1G} hold.  If $  \| \nabla Q_{s} (\mx^s)\| \leq \varepsilon_s$  holds then 
$\|\nabla Q_{s+1}(\mx^{s+1})\| \leq \varepsilon_{s+1}$ for 
\be \label{jmaxG} k(s)=\Bigg \lceil \Big |\frac{ \log(\mu \varepsilon_{s+1})-\log(L+2 \theta_{s+1})(\varepsilon_s+\bar{c}_s+\bar{c}_{s+1})}{\log(\rho_{s+1})} \Big |\Bigg \rceil,\ee 
where $  \rho_{s+1} $ is a constant such that $\|\MM_{s+1}\|\leq \rho_{s+1}<1$ and $\bar{c}_s=\|\mathbf{c}_s\|$.
\end{lemma}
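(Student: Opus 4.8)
The plan is to reproduce the proof of Lemma \ref{Lemajmax} almost line for line, replacing the fixed penalty objective $\Phi_{\theta_s}$ by the re-linearized quadratic models $Q_s$ defined in \eqref{201}, and carefully tracking the fact that now \emph{both} the system matrix $\AAA_s$ and the right-hand side $\mc_s$ change from one outer iteration to the next, whereas in the quadratic case only the penalty parameter $\theta$ varied.

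First I would note that under Assumptions A\ref{A1}--A\ref{B1G} the matrix $\AAA_{s+1}=\nabla^2 F(\mx^{s})+\theta_{s+1}\LL$ is symmetric positive definite with $\mu\II\preceq\AAA_{s+1}$, since $\nabla^2 F\succeq\mu\II$ by strong convexity and $\theta_{s+1}\LL\succeq 0$. Hence the model $Q_{s+1}$ has a unique minimizer $\mx^*_{s+1}$ characterized by $\AAA_{s+1}\mx^*_{s+1}=\mc_{s+1}$, and the JOR iteration started at $\mathbf{z}^0=\mx^{s}$ converges to it with $\|\mathbf{z}^{k}-\mx^*_{s+1}\|\leq\rho^{k}_{s+1}\|\mx^{s}-\mx^*_{s+1}\|$. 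Writing $\nabla Q_{s+1}(\mathbf{z}^{k})=\AAA_{s+1}(\mathbf{z}^{k}-\mx^*_{s+1})$ and bounding $\|\AAA_{s+1}\|\leq\|\nabla^2 F(\mx^{s})\|+\theta_{s+1}\|\LL\|\leq L+2\theta_{s+1}$ — using Assumption A\ref{B1G} for $\|\nabla^2 F\|\leq L$ and $\|\LL\|\leq 2$ — I obtain, exactly as in \eqref{100},
$$\|\nabla Q_{s+1}(\mathbf{z}^{k})\|\leq (L+2\theta_{s+1})\rho^{k}_{s+1}\|\mx^{s}-\mx^*_{s+1}\|.$$

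The only genuinely new step is the bound on $\|\mx^{s}-\mx^*_{s+1}\|$, which I would split by the triangle inequality as $\|\mx^{s}-\mx^*_{s}\|+\|\mx^*_{s}-\mx^*_{s+1}\|$, where $\mx^*_{s}$ is the minimizer of the previous model $Q_{s}$. The first term is controlled exactly as in \eqref{101}: since $\nabla Q_{s}(\mx^{s})=\AAA_{s}(\mx^{s}-\mx^*_{s})$ and $\mu\II\preceq\AAA_{s}$, the hypothesis $\|\nabla Q_{s}(\mx^{s})\|\leq\varepsilon_s$ gives $\|\mx^{s}-\mx^*_{s}\|\leq\varepsilon_s/\mu$. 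The crucial difference from the quadratic setting enters in the second term: because the Hessian and gradient are recomputed at different points, the right-hand sides $\mc_{s}$ and $\mc_{s+1}$ are now distinct, so I cannot collapse them into a single constant. Instead I would bound each minimizer separately, $\|\mx^*_{s}\|\leq\|\AAA_{s}^{-1}\|\|\mc_{s}\|\leq\bar{c}_{s}/\mu$ and likewise $\|\mx^*_{s+1}\|\leq\bar{c}_{s+1}/\mu$, yielding $\|\mx^*_{s}-\mx^*_{s+1}\|\leq(\bar{c}_{s}+\bar{c}_{s+1})/\mu$. This is precisely the point at which the term $2\bar{c}$ of \eqref{jmax} is replaced by $\bar{c}_{s}+\bar{c}_{s+1}$ in \eqref{jmaxG}.

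Combining these estimates gives $\|\nabla Q_{s+1}(\mathbf{z}^{k})\|\leq (L+2\theta_{s+1})\rho^{k}_{s+1}(\varepsilon_s+\bar{c}_{s}+\bar{c}_{s+1})/\mu$. Finally I would impose that the right-hand side be at most $\varepsilon_{s+1}$, take logarithms and solve for $k$; since $\log\rho_{s+1}<0$ the inequality reverses upon division, producing exactly the threshold $k(s)$ in \eqref{jmaxG}, after which $\mathbf{z}^{k(s)}=\mx^{s+1}$ satisfies $\|\nabla Q_{s+1}(\mx^{s+1})\|\leq\varepsilon_{s+1}$. I do not anticipate a serious obstacle, as the argument is structurally identical to Lemma \ref{Lemajmax}; the only care required is to keep $\mc_{s}$ and $\mc_{s+1}$ separate throughout and to invoke Assumption A\ref{B1G} for the uniform Hessian bound $L$, which in the quadratic case followed directly from the eigenvalue bounds $l_i$.
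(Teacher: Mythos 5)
Your proposal is correct and is exactly the adaptation the paper intends: the paper omits the proof of Lemma \ref{LemajmaxG}, stating only that it follows "the same ideas as in the proof of Lemma \ref{Lemajmax}", and your argument reproduces that proof with $\Phi_{\theta_s}$, $\AAA(\theta_s)$, $\mathbf{c}$ replaced by $Q_s$, $\AAA_s$, $\mathbf{c}_s$. You also correctly isolate the one genuinely new point — bounding $\|\mx^*_{s}-\mx^*_{s+1}\|$ by $(\bar{c}_s+\bar{c}_{s+1})/\mu$ because the right-hand sides now differ — which is precisely why $2\bar{c}$ in \eqref{jmax} becomes $\bar{c}_s+\bar{c}_{s+1}$ in \eqref{jmaxG}.
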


The Lemma above implies that EFIX-G is a penalty method with the penalty function $ Q $ instead of  $\Phi, $   i.e., with \eqref{202} instead of \eqref{68}. 
Notice that due to assumption A\ref{B1}, without loss of generality we can assume that the functions $f_i$ are nonnegative and thus 
 the relation between $\varepsilon_s$ and $\theta_{s}$ can remain as in \eqref{107}. We have the following convergence result which corresponds to the classical statement in centralized optimization, \cite{Nocedal}. 

\begin{te} \label{exactconvG1} Let the assumptions A\ref{A1}-A\ref{B1G} hold.    Assume that $ \{\mx^s\} $ is a sequence generated by Algorithm EFIX-G such that $k(s)$ is defined by  \eqref{jmaxG} and  $\varepsilon_s \to 0. $ If $ \{\mx^s\} $ is bounded then  every accumulation point  of $ \{\mx^s\} $ is feasible  for the problem \eqref{52}.  Furthermore, if $ \lim_{s \to \infty} \mx^s = \mx^* $ then $ \mx^* $ is the solution of problem  \eqref{52}, i.e., $x_i^*$ is the solution of problem \eqref{51} for every $i=1,...,N$.
\end{te}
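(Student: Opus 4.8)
The plan is to treat EFIX-G exactly as a quadratic penalty method in the sense of Algorithm QP, but with $\nabla Q_s$ in place of $\nabla \Phi_{\theta_s}$, and then mirror the proof of Theorem \ref{exactconvQ}. The starting point is the inequality $\|\nabla Q_s(\mx^s)\| \le \varepsilon_s$ furnished by Lemma \ref{LemajmaxG}, combined with the explicit gradient
$$\nabla Q_s(\mx^s) = g_s + \theta_s \LL \mx^s, \qquad g_s := \nabla F(\mx^{s-1}) + \nabla^2 F(\mx^{s-1})(\mx^s - \mx^{s-1}).$$
The term $g_s$ plays the role that $\nabla F(\mx^s)$ played in the quadratic case; the entire difficulty of the generic setting is concentrated in the gap between $g_s$ and $\nabla F(\mx^s)$, i.e. in the linearization error of $\nabla F$ about $\mx^{s-1}$.

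First I would establish feasibility of accumulation points under the boundedness hypothesis alone. If $\{\mx^s\}$ is bounded, then continuity of $\nabla F$ together with the global Hessian bound $\|\nabla^2 F\| \le L$ from Assumption A\ref{B1G} shows that $\{g_s\}$ is bounded, say $\|g_s\| \le G$. Rearranging $\|\nabla Q_s(\mx^s)\| \le \varepsilon_s$ gives $\|\LL \mx^s\| \le (G + \varepsilon_s)/\theta_s$, and since $\theta_s \to \infty$ and $\varepsilon_s \to 0$ the right-hand side tends to zero. Passing to any convergent subsequence $\mx^s \to \mx^*$ then yields $\LL \mx^* = 0$, hence $\WW \mx^* = \mx^*$, which is consensus $x_1^* = \cdots = x_N^*$ and feasibility for \eqref{52}. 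This reproduces the feasibility step of Theorem \ref{exactconvQ} and needs only boundedness.

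For the second claim I would assume the full limit $\mx^s \to \mx^*$ (hence also $\mx^{s-1} \to \mx^*$ and $\|\mx^s - \mx^{s-1}\| \to 0$) and recover the KKT system. Here the convergence of the whole sequence is essential: since $\|\nabla^2 F(\mx^{s-1})(\mx^s - \mx^{s-1})\| \le L\,\|\mx^s - \mx^{s-1}\| \to 0$ and $\nabla F(\mx^{s-1}) \to \nabla F(\mx^*)$ by continuity, I obtain $g_s \to \nabla F(\mx^*)$. This is precisely the point where mere boundedness would fail, and it explains why the statement requires a genuine limit rather than asserting convergence. Writing $\theta_s \LL \mx^s = \LL^{1/2}\lambda_s$ with $\lambda_s := \theta_s \LL^{1/2} \mx^s$, the boundedness of $\lambda_s$ follows from that of $\zeta_s := \theta_s \LL \mx^s$ through the eigenvalue-decomposition argument $\LL = \UU \VV \UU^T$ used in Theorem \ref{exactconvQ} (boundedness of the components $v_i[\nu^s]_i$ forces boundedness of $\sqrt{v_i}[\nu^s]_i$). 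Extracting a subsequence along which $\lambda_s \to \lambda^*$ and letting $s \to \infty$ in $\nabla Q_s(\mx^s) = g_s + \LL^{1/2}\lambda_s$, using $\varepsilon_s \to 0$, gives $0 = \nabla F(\mx^*) + \LL^{1/2}\lambda^*$, the KKT condition for \eqref{52}. Strong convexity of $F$, inherited from Assumption A\ref{B1}, upgrades this KKT point to the unique global minimizer, and the feasibility shown above supplies consensus, so $x_i^* = y^*$ solves \eqref{51} for every $i$.

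The main obstacle, and the sole genuine departure from the quadratic proof, is controlling the linearization error $g_s - \nabla F(\mx^s)$. Because Assumption A\ref{B1G} bounds $\|\nabla^2 F\|$ but does not assume Lipschitz continuity of the Hessian, I would not estimate this error directly; instead I retain $g_s$ itself, bounding it for the feasibility claim and exploiting $\|\mx^s - \mx^{s-1}\| \to 0$ with continuity of $\nabla F$ for the convergence claim. This is exactly why the generic result is conditional on convergence of $\{\mx^s\}$, in agreement with the classical quadratic-penalty theory \cite{Nocedal}.
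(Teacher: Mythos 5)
Your proposal is correct and follows essentially the same route as the paper: the paper controls the model error $\|\nabla \Phi_{\theta_s}(\mx^s)-\nabla Q_s(\mx^s)\|\le 2L\|\mx^s-\mx^{s-1}\|=:r_s$ and then reuses the argument of Theorem \ref{exactconvQ}, while you equivalently keep the linearized gradient $g_s$ in place of $\nabla F(\mx^s)$ throughout; both versions derive feasibility from $\|\LL\mx^s\|\to 0$ under boundedness and the KKT system from the boundedness of $\lambda_s=\theta_s\LL^{1/2}\mx^s$ via the eigendecomposition of $\LL$, using $\|\mx^s-\mx^{s-1}\|\to 0$ exactly where the full-sequence convergence is needed.
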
 
\begin{proof} Let us consider the problem \eqref{52} and denote $h(\mx)=\LL^{1/2} \mx. $
Let $\tilde{\mx}= \lim_{s \in K} \mx^s$ be an arbitrary accumulation point. 
 Notice that 
\begin{eqnarray} \label{207} & &  \|\nabla \Phi_{\theta_{s}}(\mx^s)-\nabla Q_{s}(\mx^s)\| \\ \nonumber 
&=&\|\nabla F(\mx^s)-\nabla F(\mx^{s-1})+\nabla^2 F(\mx^{s-1})(\mx^s-\mx^{s-1})\| \\ \nonumber 
&\leq& 2 L \|\mx^s-\mx^{s-1}\|:=r_s \end{eqnarray}
and thus the error of the quadratic model $r_s$ is also bounded over $K$.
Now,  inequality \eqref{202} together with the previous inequality implies that 
\be \label{70AG} \|\nabla \Phi_{\theta_{s}}(\mx^s)\| \leq \varepsilon_s+r_s,\ee i.e., we obtain  
$$  \|\LL \mx^s\| \leq \frac{1}{\theta_{s}} (\|\nabla F(\mx^s)\|  + \varepsilon_s+r_s). $$
Taking the limit over $K$ in the previous inequality, we conclude that  $\LL \tilde{\mx}=0$, so the feasibility condition is satisfied, i.e., we have  $\tilde{x}_1=\tilde{x}_2=...=\tilde{x}_N$. 

If $ \lim_{s \to \infty} \mx^s = \mx^* $ we have that the error in quadratic model converges to zero from (\ref{207}), i.e. $ \lim_{s \to \infty}  r_s = 0 $  and thus \eqref{70AG} implies that 
$$\lim_{s \in K} \nabla \Phi_{\theta_{s}}(\mx^s)=0.$$
Following the same steps as in the second part of the proof of Theorem \ref{exactconvQ}, we conclude that $\mx^*$ is optimal and the statement follows. 
\end{proof}

\section{Numerical results}
\subsection{Quadratic case} 
We test EFIX-Q method on quadratic functions \eqref{81}  generated as follows, \cite{Espectral}.  Vectors $b_i$ are drawn from the Uniform distribution on $[1,31]$, independently from each other. Matrices $B_{ii}$ are of the form $B_{ii}=P_i S_i P_i$, where $S_i$ are diagonal matrices with Uniform distribution on $[1,101]$ and  $P_i$ are matrices of orthonormal eigenvectors of $\frac{1}{2} (C_i+C_i^T)$ where  $C_i$ have components drawn independently from the standard Normal distribution. 

The network is formed as follows, \cite{Espectral}. We sample $N$ points randomly and uniformly from $[0,1] \times [0,1]$. Two points are directly connected if their distance, measured by the Euclidean  norm,  is smaller than $r=\sqrt{\log(N)/N}$. The graph is connected. Moreover, if nodes $i$ and $j$ are directly connected, we set $w_{i,j}=1/\max \{deg(i),deg(j)\}$, where $deg(i)$ stands for the degree of node $i$ and $ w_{i,i}=1-\sum_{j\neq i}w_{i,j}$. We test on graphs with $N=30$ and $N=100$ nodes. 

The error metrics  is the following
\be \label{errore} e(x^k):=\frac{1}{N} \sum_{i=1}^{N}\frac{\|x_i^k-y^*\|}{\|y^*\|},\ee
where $y^*\neq 0$ is the exact (unique) solution of problem \eqref{51}.

The parameters are set as follows. The Lipschitz constant is calculated as $L=\max_{i} l_i$, where $l_i$ is the largest eigenvalue of $B_{ii}$. The strong convexity constant is calculated as $\mu=\min_{i} \mu_i$, where $\mu_i>0$ is the smallest eigenvalue of $B_{ii}$. 

The proposed method is denoted by  EFIX-Q  $k(s)$ balance to indicate that we use the number of inner iterations given by \eqref{jmax} where $ L, \mu, \bar{c}$ are calculated at the initial phase of the algorithm and   imposing  \eqref{107} to balance two types of errors as discussed in Section 3. The initial value of the penalty parameter is set to $\theta_0=2L$. The choice is motivated by the fact that  the usual step size bound in many gradient-related methods is  $\alpha <1/(2L)$ and  $1/\alpha$ corresponds to the penalty parameter. Hence, we set $\theta \geq 2L$. Further, the penalty parameter is updated by  $\theta_{s+1}=(s+1) \theta_{s}$.
We tested the Jacobi method, i.e., the relaxation parameter is set to  $q=1$. We also tested  JOR method with the parameter $q=2/3$ but the results are quite similar and hence not reported here. The method is designed to  solve the sequence of quadratic problems up to  accuracy  determined $\{ \varepsilon_s \}. $  Clearly, the precision, measured by $ \varepsilon_s $ determines the computational costs. On the other hand it is already discussed that the error in solving a particular quadratic problem should not be decreased too much given that the quadratic penalty is not an exact method and hence each quadratic subproblem is only an approximation of the original constrained problem, depending on the penalty parameter $ \theta_s $. Therefore, we tested several choices of the inner iteration counter and  parameter update, to investigate the error balance and its influence on the convergence. The method abbreviated as    EFIX-Q $k(s)$  is obtained with  $\varepsilon_0=\theta_0= 2L $,  $\varepsilon_s=\varepsilon_0/s$   for $s>0$,  and $ k(s) $ defined by (\ref{jmax}). Furthermore, to demonstrate the effectiveness of   $k(s)$ stated in (\ref{jmax}) we also report the results from the experiments  where the inner iterations are terminated only if \eqref{68} holds, i.e. without a predefined sequence $ k(s).$ We refer to this method as EFIX-Q stopping. Notice that the exit criterion of EFIX-Q is not computable in the distributed framework and the test reports here are performed only to demonstrate the effectiveness of (\ref{jmax}).

The proposed method is compared with the state-of-the-art method \cite{harnessing,angelia} abbreviated as DIGing $1/(mL)$, where $1/(mL)$ represents the step size, i.e., $\alpha=1/(mL)$ for different values of $m \in \{2,3,10,20,50,100\}$. This method is defined as follows
$$x_i^{k+1}=\sum_{j=1}^{N} w_{ij} x_j^k-\alpha u_i^k, \; u_i^{k+1}=\sum_{j=1}^{N} w_{ij} u_j^k+ B_{ii} (x_i^{k+1}-x_i^{k}), u_i^0=\nabla f_i(x_i^0).$$
The cost of this method, In terms of scalar products, per node and  per iteration can be estimated as $3n$ as $\sum_{j} w_{ij} x_j^k$ takes  $n$  scalar products as well as  $\sum_{j} w_{ij} u_j^k$  and  $B_{ii} (x_i^{k+1}-x_i^{k})$. 

In order to compare the costs, we unfold the proposed EFIX-Q method considering all inner iterations consecutively (so $ k $ below is the cumulative counter for all inner iterations consecutively) as follows
$$x_i^{k+1}=q D_{ii}^{-1} G_{ii} x_i^k+(1-q)x_i^k+q \theta D_{ii}^{-1} \sum_{i \neq j} w_{ij} x_j^k+q D_{ii}^{-1} B_{ii} b_i.$$
Since $D_{ii}$ is  diagonal matrix, $q D_{ii}^{-1} G_{ii} x_i^k$ takes $n+1$ scalar products as well as $D_{ii}^{-1} \sum_{i \neq j} w_{ij} x_j^k$. Moreover, $B_{ii} b_i$ is calculated only once, at the initial phase, so $D_{ii}^{-1} B_{ii} b_i$ costs only 1 scalar product. Therefore, the cost of EFIX-Q method can be estimated as $2n+3$ scalar products per node, per iteration. The difference between EFIX-Q and DIGing can be significant  especially for larger value of  $n$, given the ratio $ 3n $ versus $ 2n + 3. $  Moreover, DIGing method requires  at each iteration the exchange  of two vectors, $x_j$ and $u_j$ among all neighbours, while EFIX requires only the exchange of $x_j$, so it is 50$\%$ cheaper than the DIGing method in terms of communication costs. 

We set $\mx^0=0$ for all the tested methods and consider $n=10$ and $n=100$. Figure 1 presents the errors $e(\mx^k)$ throughout iterations $k $ for $ N = 30 $ and $ N = 100. $ The results for different values of $ n $ appear to be very similar and hence we report only the case $ n = 100.  $
\begin{figure}[htbp]
    \hspace*{-0.5in}\includegraphics[width=0.45\textwidth, angle=-90]{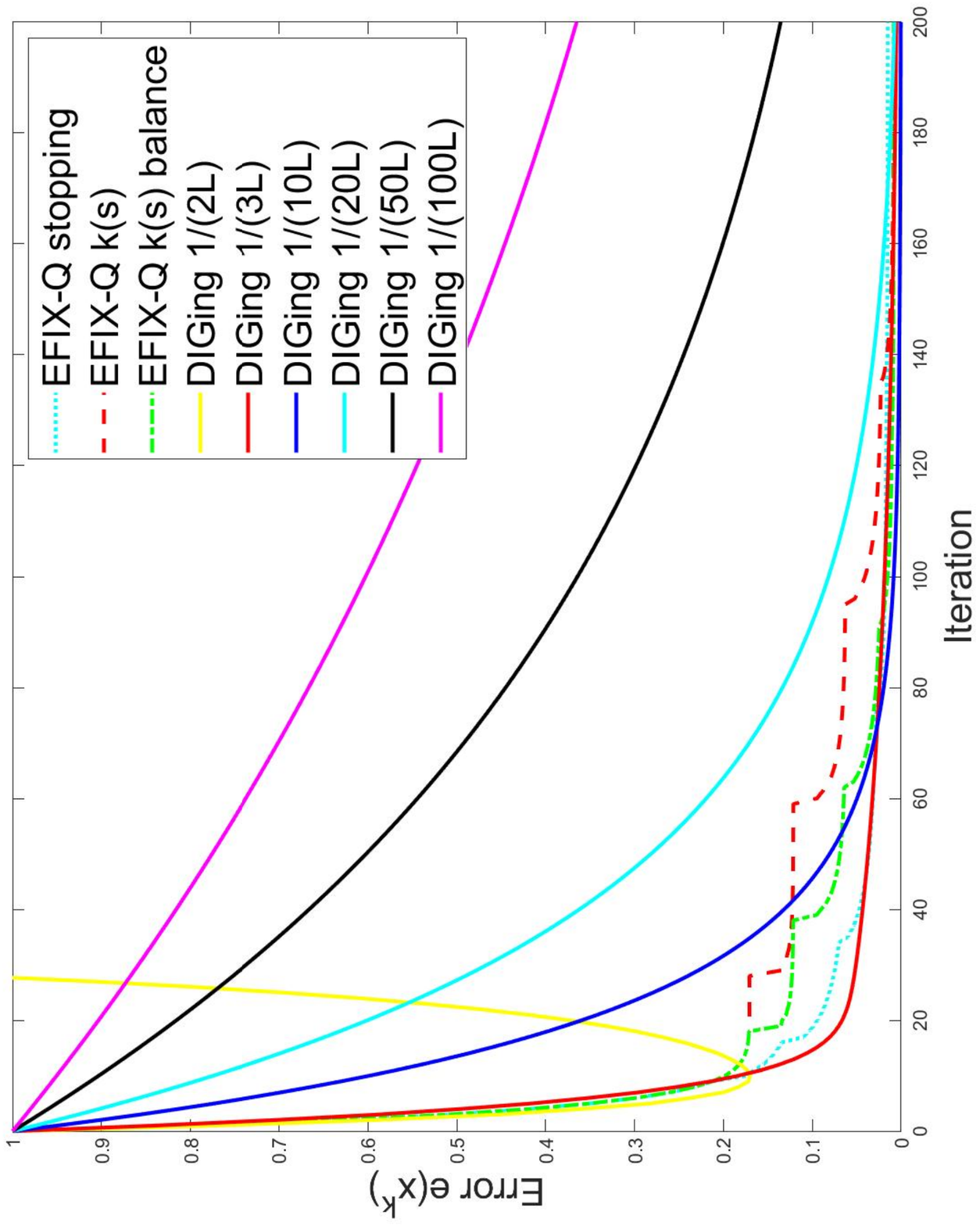}\includegraphics[width=0.45\textwidth, angle=-90]{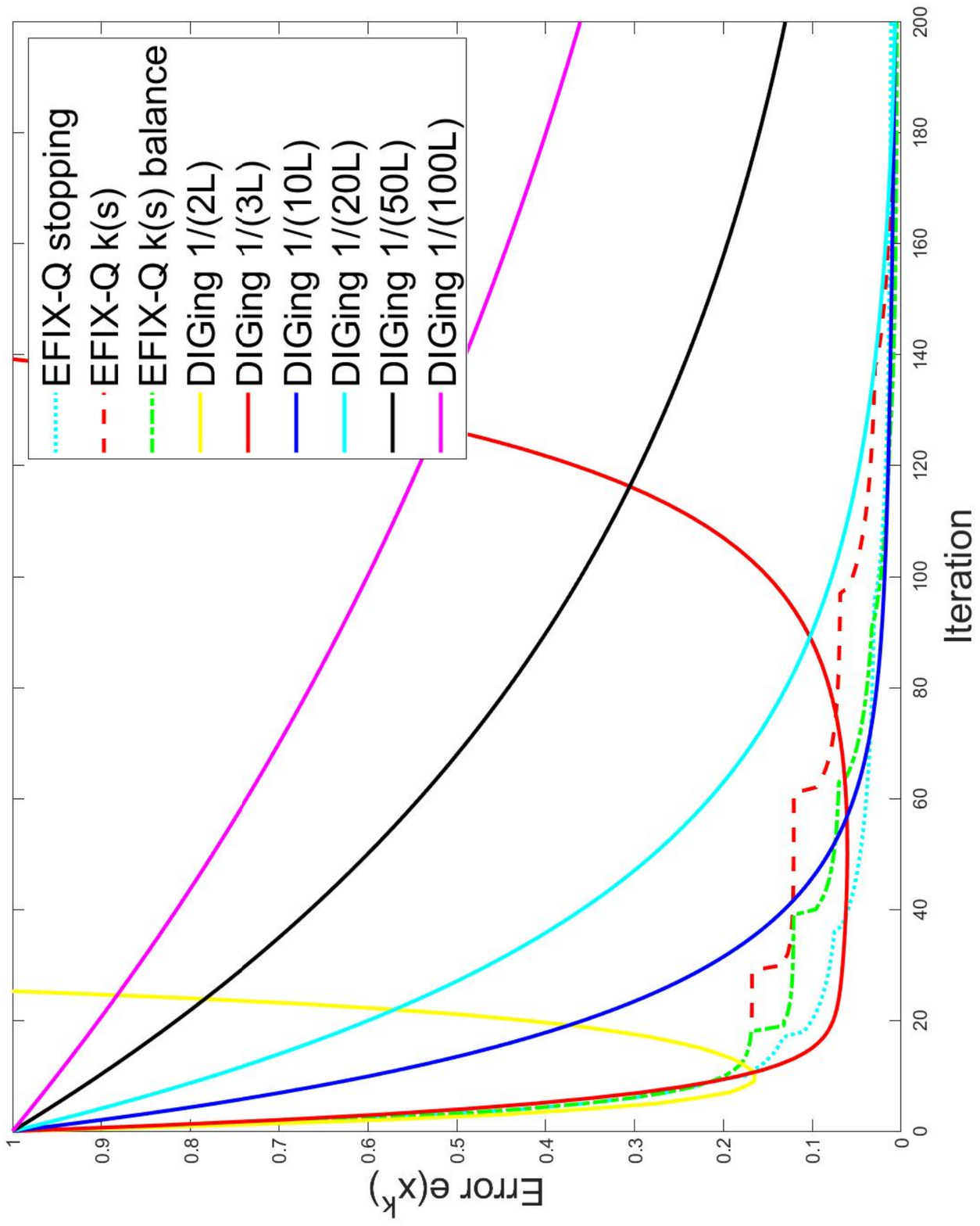}
    \caption{{\footnotesize{The EFIX methods (dotted lines) versus the DIGing method, error \eqref{errore} propagation through iterations for  $ n=100$, $N=30$ (left) and $ n=100$, $N=100$ (right).   }} }
    \label{all1}
\end{figure}

Comparing the number of iterations of all considered methods, from  Figure 1  one can see that  EFIX-Q methods are highly competitive with the best DIGing method in the case of $N=30$. Furthermore,  EFIX-Q outperforms all the convergent DIGing methods in the case of $N=100$.  Moreover, we can see that EFIX-Q $k(s)$ balance behaves similarly to EFIX-Q stopping, so the number of inner iterations $k(s)$ given in Lemma \ref{Lemajmax} is well estimated. Also,  EFIX-Q $k(s)$ balance improves the performance of EFIX-Q $k(s)$  and the balancing of errors yields a more efficient method.  

We compare the tested methods in terms of computational costs, measured by scalar products and communication costs as well. The results are presented in Figure 2 where we compare EFIX-Q $k(s)$ balance with the best convergent DIGing method in the cases  $n=10, N=30$ (top) and $n=100, N=100$ (bottom).  The results show clear advantages of EFIX-Q, especially in the case of larger $n$ and $N$.

\begin{figure}[htbp]
    \hspace*{-0.5in}\includegraphics[width=0.45\textwidth, angle=-90]{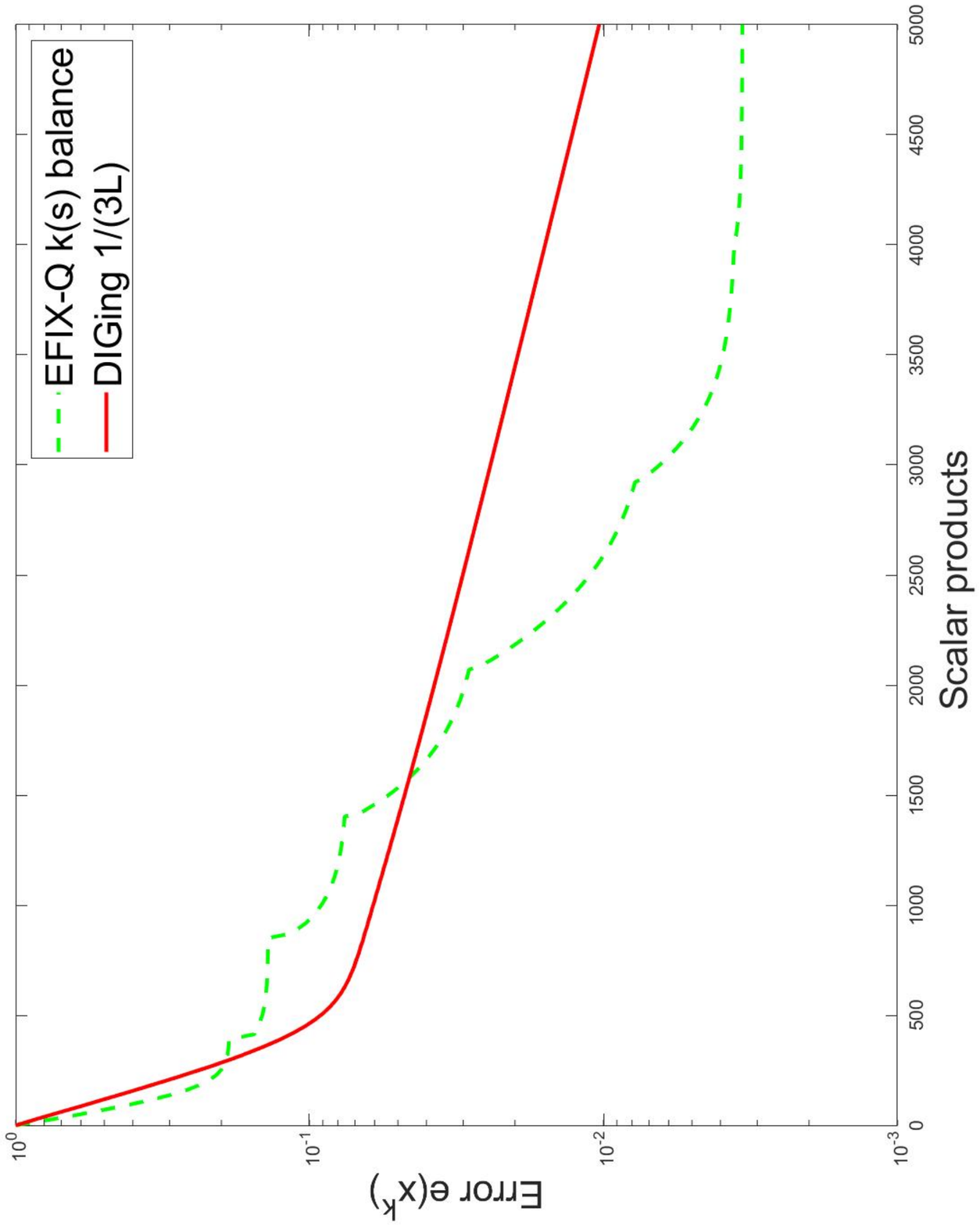}\includegraphics[width=0.45\textwidth, angle=-90]{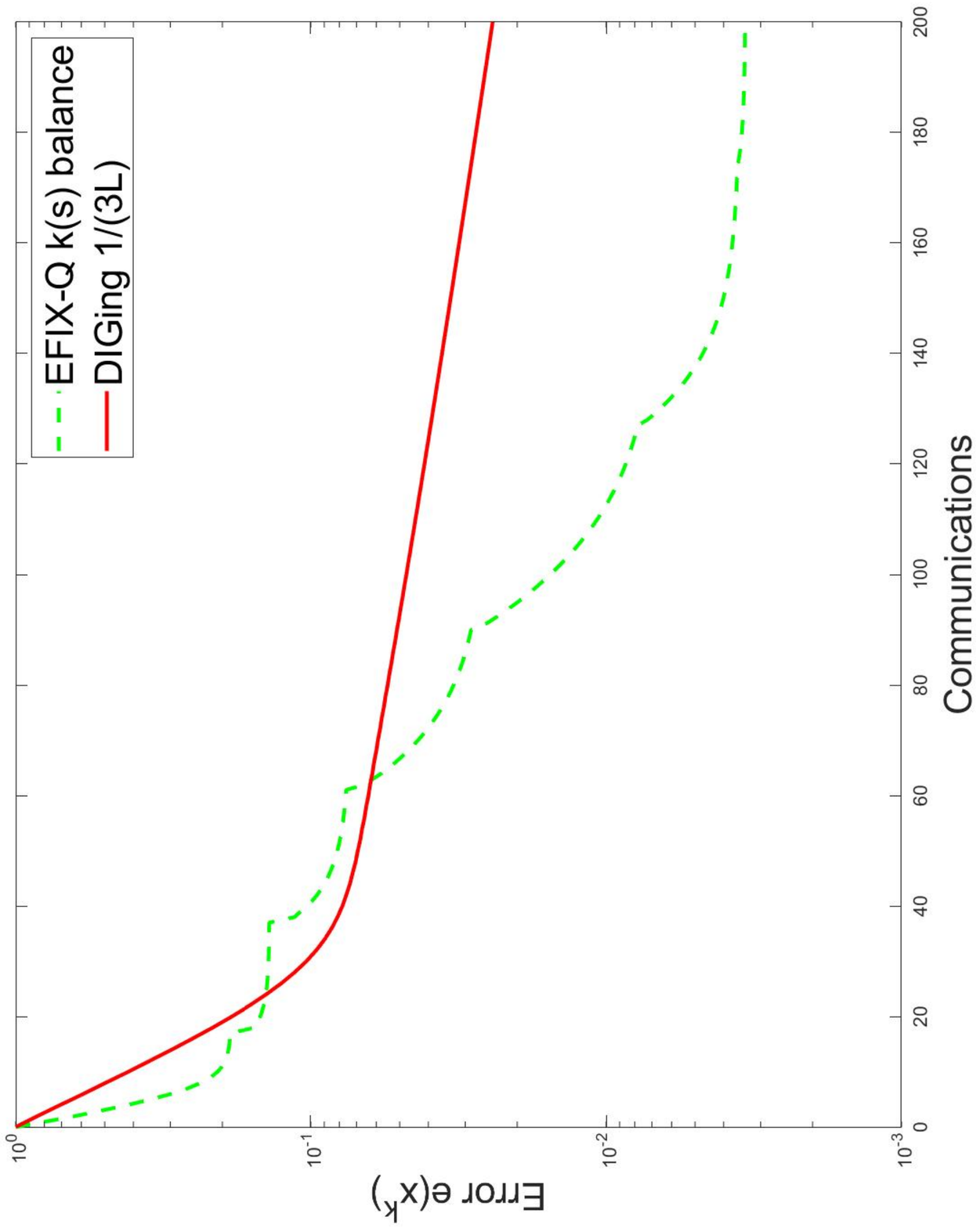}\\
    \hspace*{-0.5in}\includegraphics[width=0.45\textwidth, angle=-90]{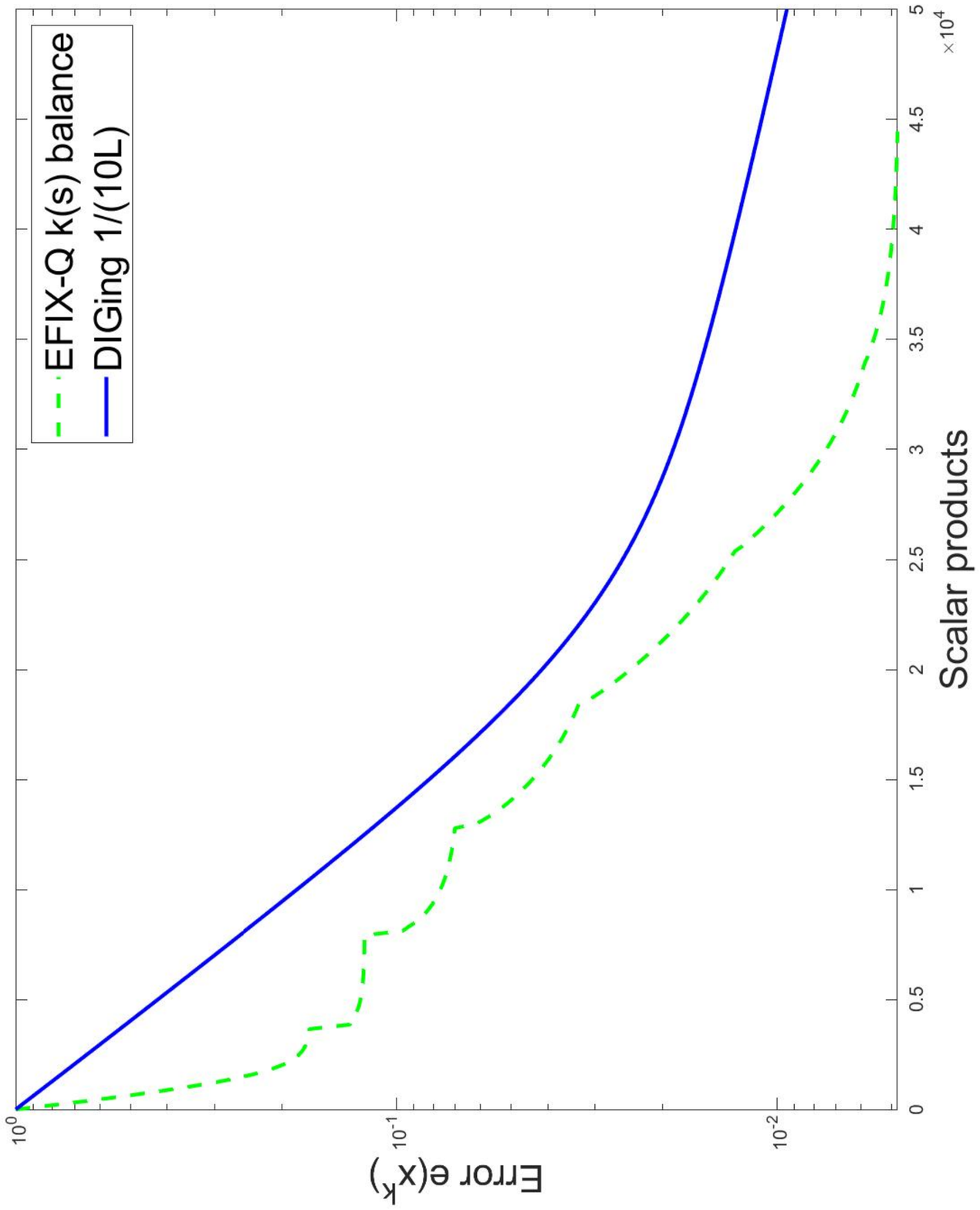}\includegraphics[width=0.45\textwidth, angle=-90]{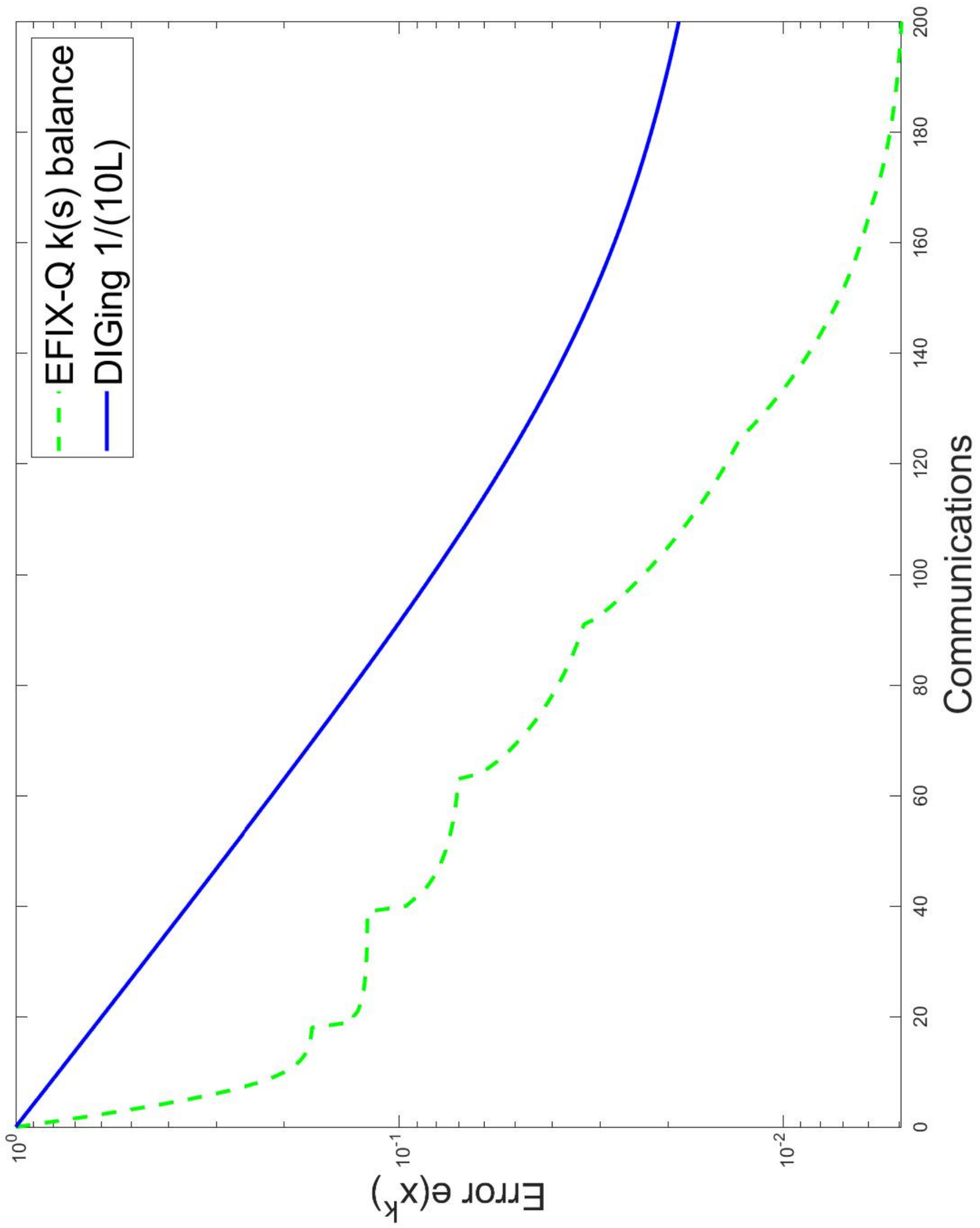}
    \caption{{\footnotesize{The proposed method (dotted line) versus the DIGing method, error \eqref{errore} and the computational cost (left) and communications (right) for $n=10$, $N=30$ (top) and $n=100$, $N=100$ (bottom). }} }
    \label{all1}
\end{figure}

\subsection{Strongly convex problems}
EFIX-G method is tested on the binary classification problems for data sets: Mushrooms \cite{mush} ($n=112$, total sample size $T=8124$), CINA0 \cite{cina0} ($n=132$, total sample size $T=16033$) and Small MNIST \cite{smallmnist} ($n=100$, total sample size $T=7603$). For each of the problems, the data is divided across 30 nodes of the graph described in Subsection 5.1 The logistic regression with the quadratic regularization is used and thus the local objective functions are of the form 
$$f_i(y)=\sum_{j \in J_i} \log(1+e^{-\zeta_j d_j^T y})+\frac{\mu}{2}\|y\|^2:=\sum_{j \in J_i} \tilde{f}_j(y)+\frac{\mu}{2}\|y\|^2,$$
where $J_i$ represents the part of the data assigned to node $i$, $d_j \in \RR^n $ is the corresponding vector of attributes and $\zeta_j \in \{-1,1\}$ represents the label. Evaluation of one local cost function $f_i$ requires $|J_i| +1$ scalar products. However, calculations of  the gradient and the Hessian do not require any additional scalar products since  
$$ \nabla \tilde{f}_j(y)=\frac{1-\psi_j(y)}{\psi_j(y)}\zeta_j d_j+\mu y, \; \nabla^2 \tilde{f}_j(y)=\frac{\psi_j(y)-1}{\psi^2_j(y)}d_j d^T_j+\mu I,$$
$$ \psi_j(y):=1+e^{-\zeta_j d_j^T y}.$$
Moreover, $(\psi_j(y)-1)/\psi^2_j(y) \in (0,1)$ and thus all the local cost functions are $\mu$-strongly convex. The data is scaled in a such way that the Lipschitz constants $l_i$ are 1 and thus $L=1+\mu $. We set $\mu=10^{-4}$.

We test  EFIX-G  $k(s)$ balance, the counterpart of the quadratic version EFIX-Q  $k(s)$ balance, with $k(s)$ defined by \eqref{jmaxG}. The JOR  parameter $q_s$ is set according to \eqref{88}, more precisely, we set  $q=2 \theta_{s} (1-\bar{w})/(L+2 \theta_{s})$. 
A rough estimation of  $\bar{c}_s$ is s $3L\sqrt{N}$ since 
$$ \|\mathbf{c}_s\| \leq  \|\nabla^2 F(\mx^{s-1})\| \|\mx^{s-1}\|+\|\nabla  F(\mx^{s-1})-\nabla  F(\tilde{\mx})\| \leq L 3 \max \{ \|\mx^{s-1}\|, \|\tilde{\mx}\|\}, $$ 
where $\tilde{\mx}$ is a stationary point of the function $F$.
The remaining parameters are set as in the quadratic case. 

Since the solution is unknown in general, the different error metric is used - the average value of the original objective function $f$ across the nodes' estimates  
\be \label{errorv} v(\mx^k)=\frac{1}{N} \sum_{i=1}^{N} f(x_i^k)=\frac{1}{N} \sum_{i=1}^{N} \sum_{j=1}^{N} f_j(x_i^k).\ee

We compare the proposed method with DIGing which takes the following form for general, non-quadratic problems 
$$x_i^{k+1}=\sum_{j=1}^{N} w_{ij} x_j^k-\alpha u_i^k, \; u_i^{k+1}=\sum_{j=1}^{N} w_{ij} u_j^k+ \nabla f_i (x_i^{k+1})-\nabla f_i(x_i^{k}), u_i^0=\nabla f_i(x_i^0).$$
For each of the data sets we compare the methods with respect to iterations, communications and computational costs (scalar products). The communications of the Harnessing method are twice more expensive than for the proposed method, as in the quadratic case. The computational cost of the Harnessing method is $3 n+|J_i|$ scalar products per iteration, per node:  weighted sum of $x_j^k$ ($n$ scalar products); weighted sum of $u_j^k$ ($n$ scalar products); evaluating $\nabla f_i(x_i^{k})$ ($n+|J_i|$ scalar products) because evaluating  of each gradient $\nabla \tilde{f}_j(x_i^k), j \in J_i$ costs 1 scalar product (for $d_j^Tx_i^k$ needed for calculating $\psi_j(x_i^{k})$)  and evaluating the gradient $\nabla f_i(x_i^{k})$ takes the weighted sum of $d_j$ vectors
$$\nabla f_i(x_i^{k})=\sum_{j \in J_i} \frac{1-\psi_j(x_i^{k})}{\psi_j(x_i^{k})}\zeta_j d_j+\mu x_i^{k},$$
 which costs $n$ scalar products. On the other hand, the cost of EFIX-G  $k(s)$ balance per node remains $2n+3$ scalar products at each inner iteration  while in the outer iterations ($s$) we have additional $|J_i|+2n$ scalar products for evaluating $c_i$,  
$$c_i=\sum_{j \in J_i} \frac{\psi_j(x_i^{s-1})-1}{\psi^2_j(x_i^{s-1})}d_j d^T_j x_i^{s-1} + \mu x_i^{s-1} -\nabla  f_i(x_i^{s-1}).$$
Thus we have $|J_i|$ scalar products of the form $d^T_j x_i^{s-1}$, a weighted sum od $d_j$ vectors which costs $n$ SP and the gradient  $\nabla  f_i(x_i^{s-1})$ which costs only $n$ SP  since the scalar products $d^T_j x_i^{s-1}$ are already evaluated and calculated in the first sum. 

The results are presented in Figure 3  $y$-axes is in the log scale). The first column contains graphs for EFIX - G $ k(s) $balance  and all DIGing methods with error metrics through iterations. Obvously, the EFIX -G method is either comparable or better in comparison with DIGing methods. To emphasize the difference in computational  costs we plot in column two the graphs of  error metrics with respect to scalar products for EFIX -G  and the two best DIGing method. The same is done in column three of the graph for the communication costs.

\begin{figure}[htbp]
    \hspace*{-0.5in}
    \includegraphics[width=0.3\textwidth, angle=-90]{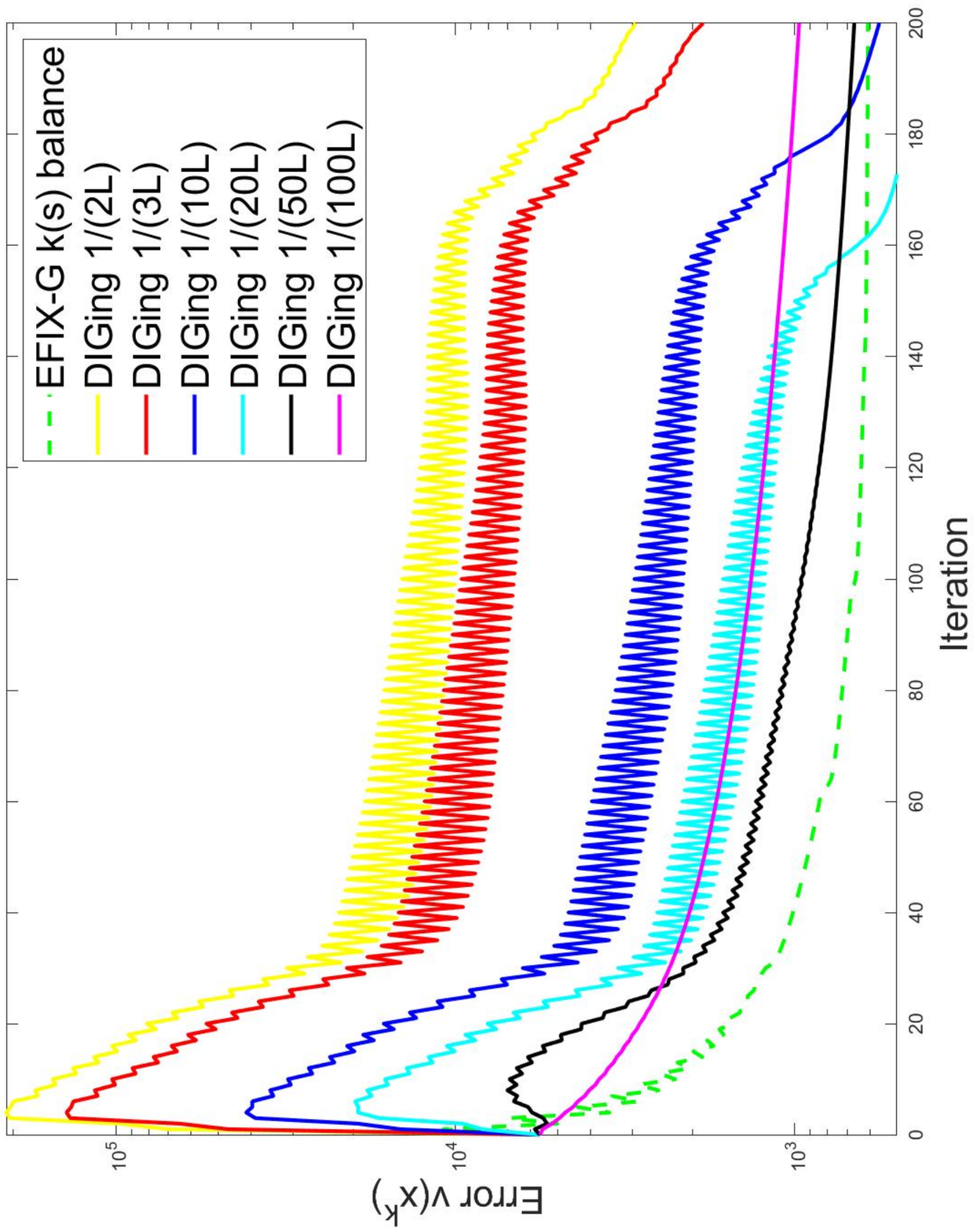}
    \includegraphics[width=0.3\textwidth, angle=-90]{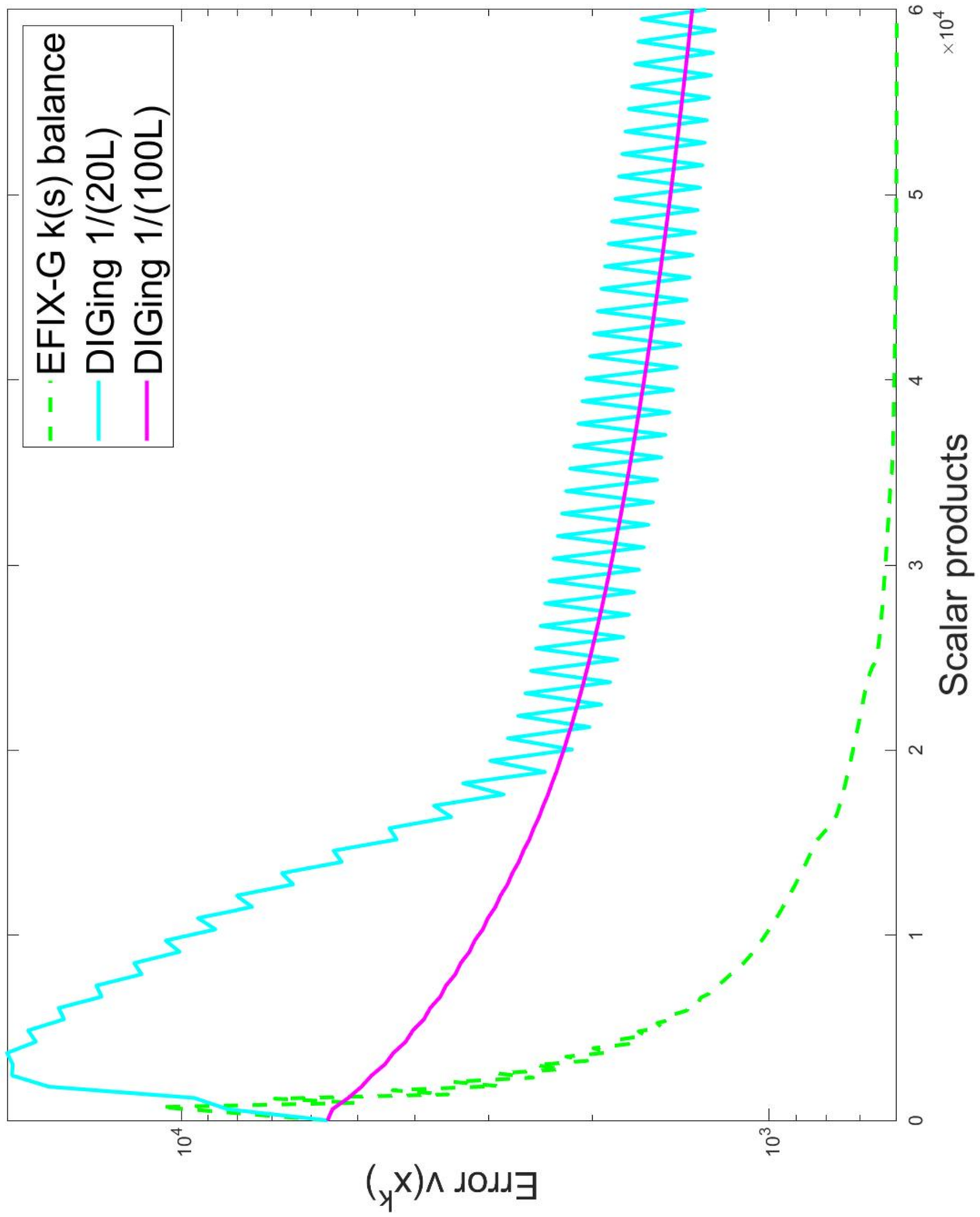}\includegraphics[width=0.3\textwidth, angle=-90]{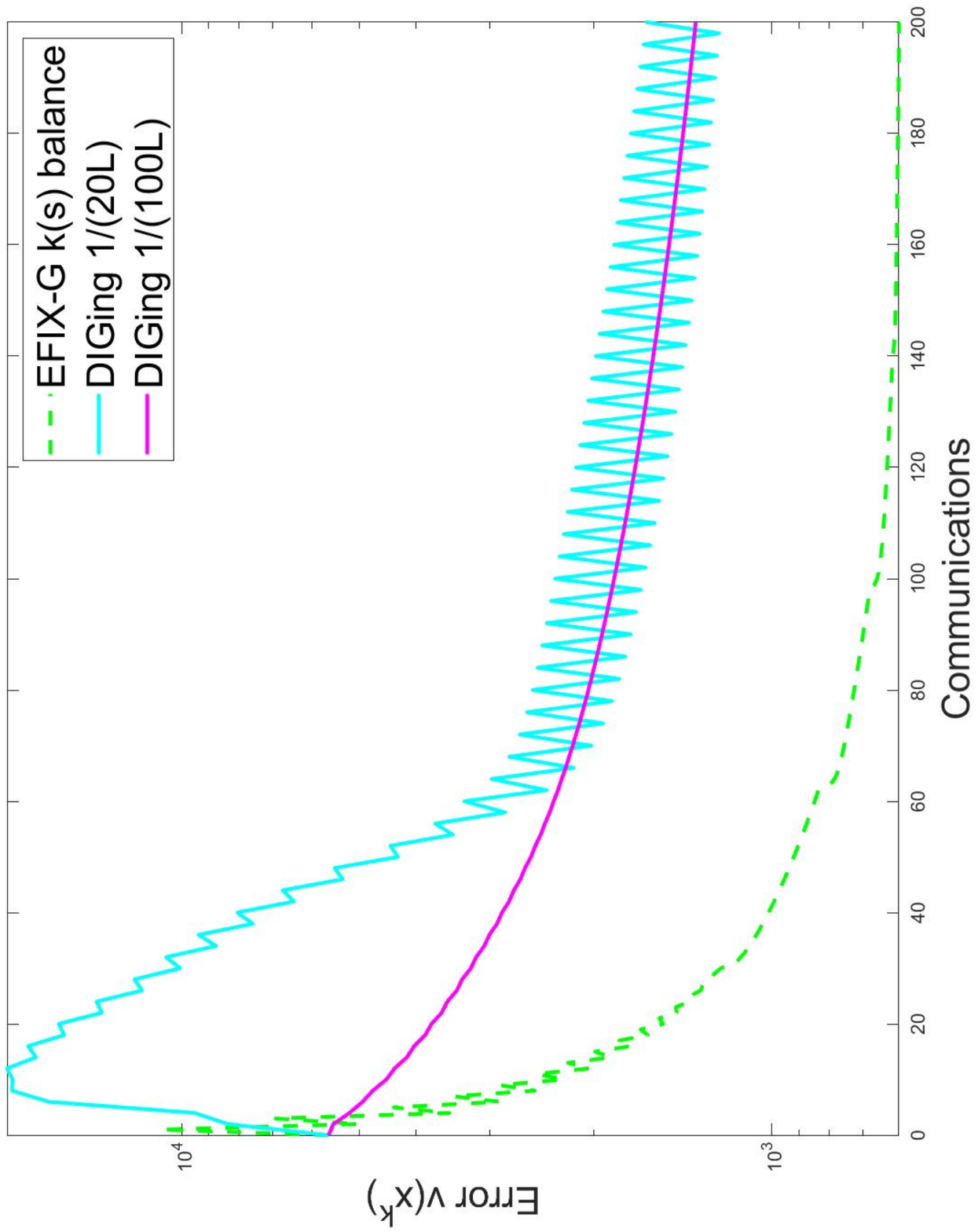}\\
    \hspace*{-0.5in}\includegraphics[width=0.3\textwidth, angle=-90]{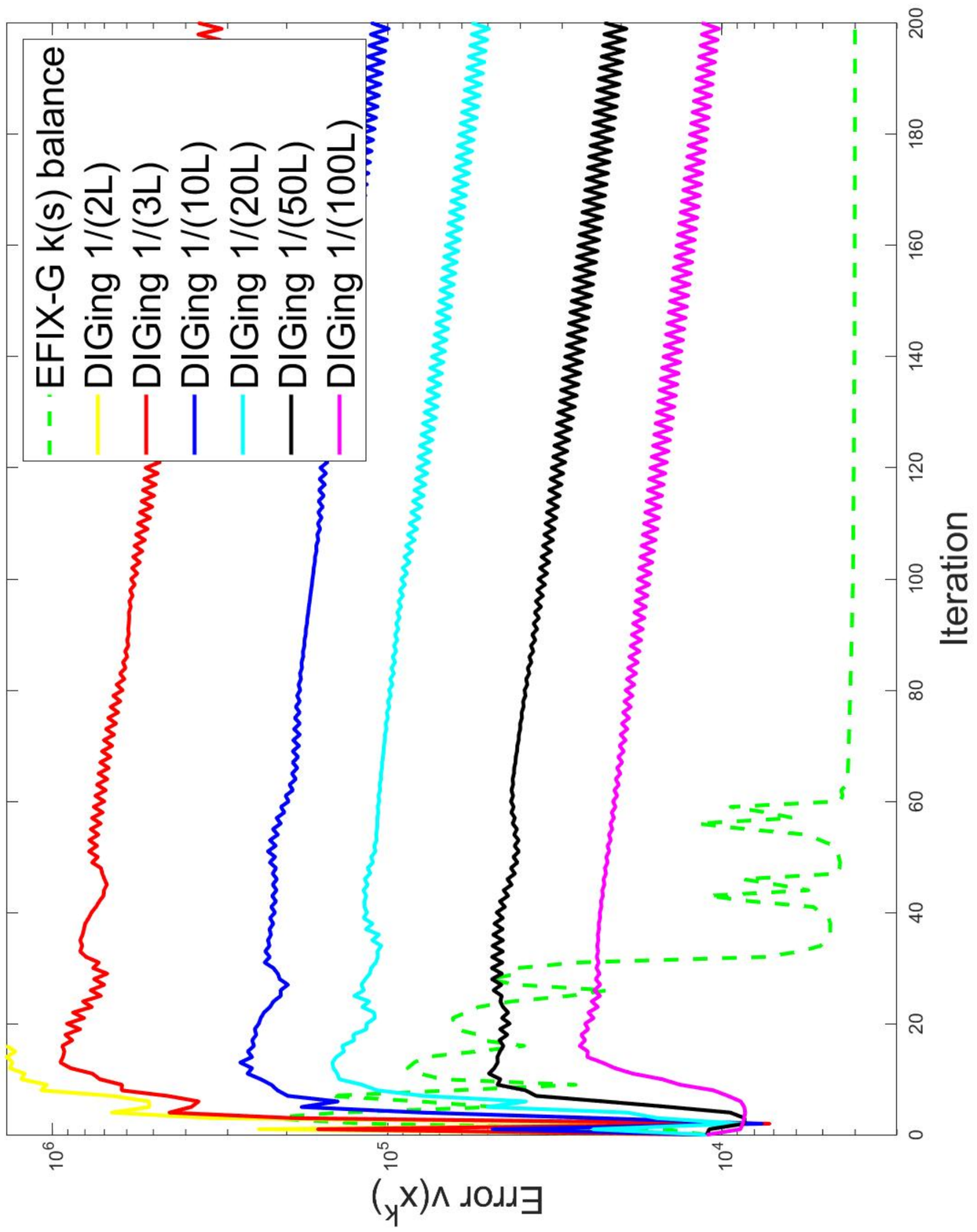}\includegraphics[width=0.3\textwidth, angle=-90]{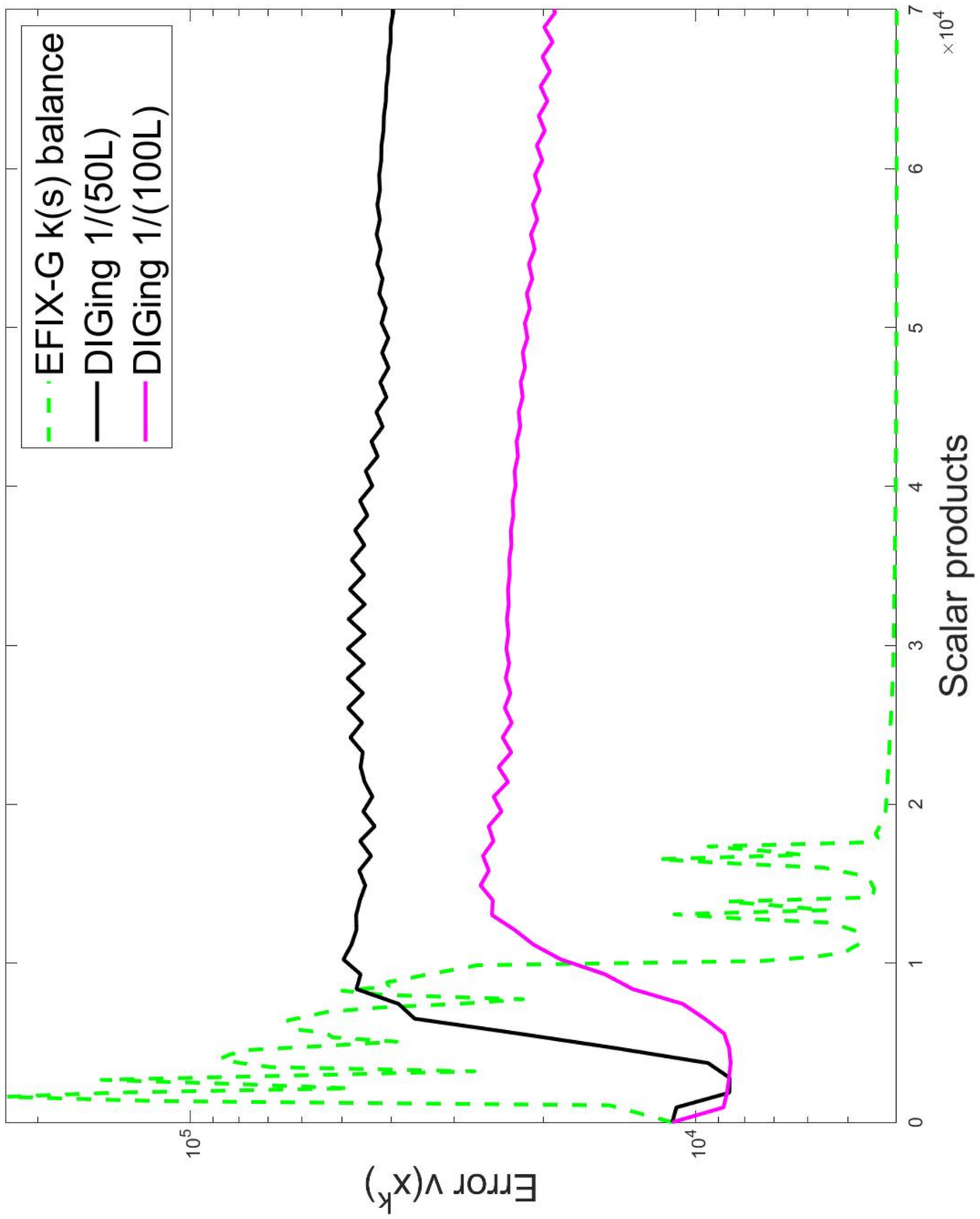}\includegraphics[width=0.3\textwidth, angle=-90]{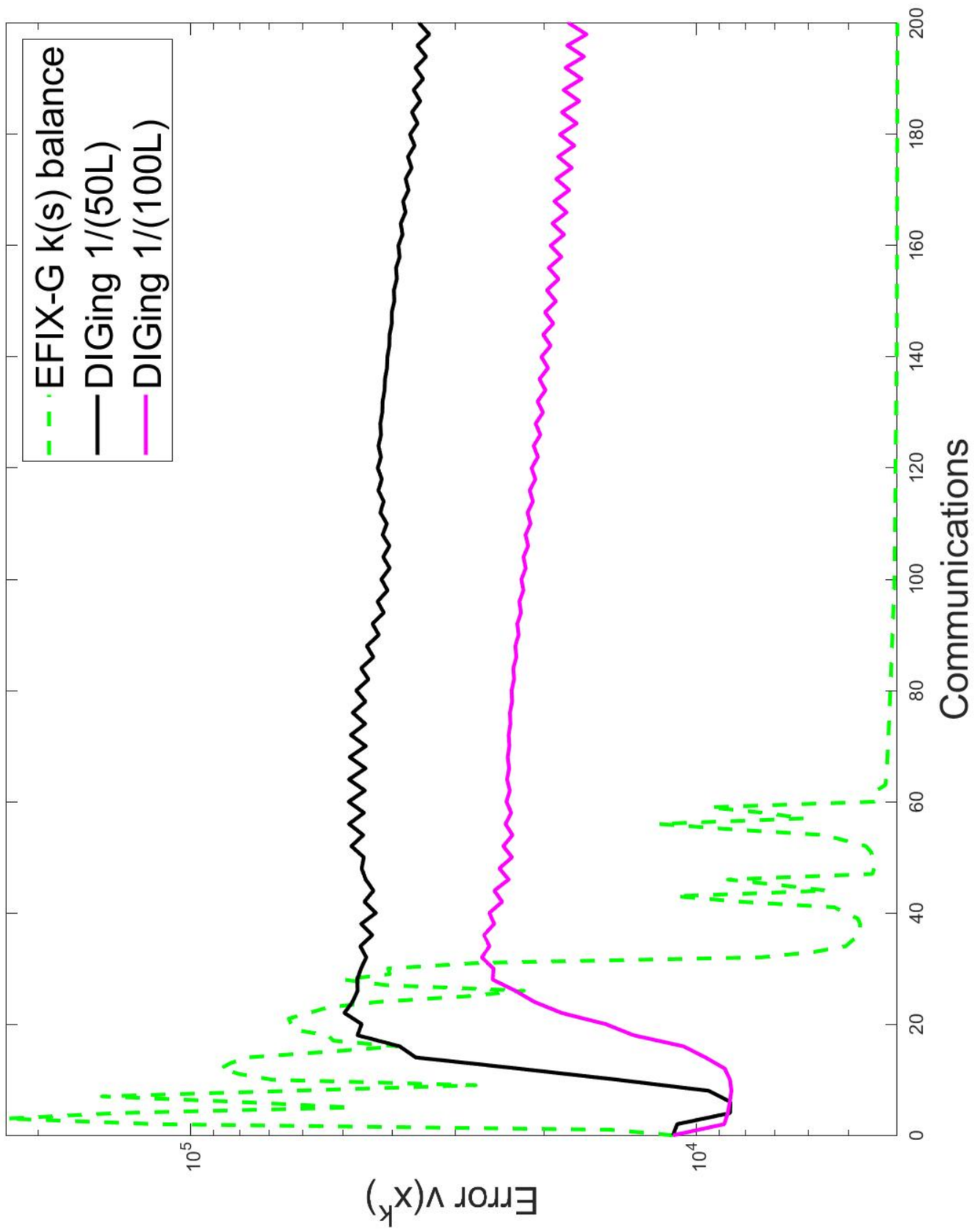}\\
    \hspace*{-0.5in}\includegraphics[width=0.3\textwidth, angle=-90]{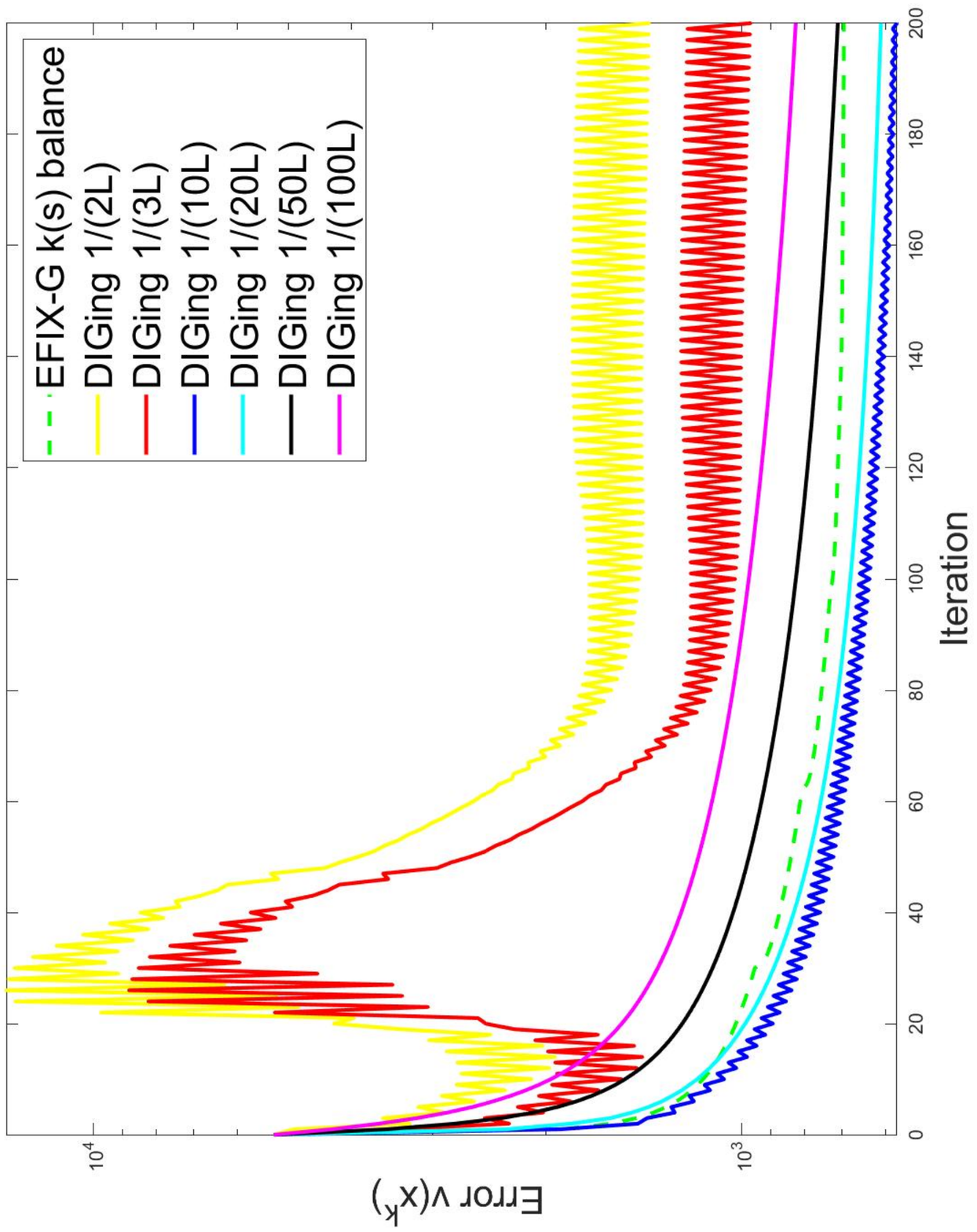}\includegraphics[width=0.3\textwidth, angle=-90]{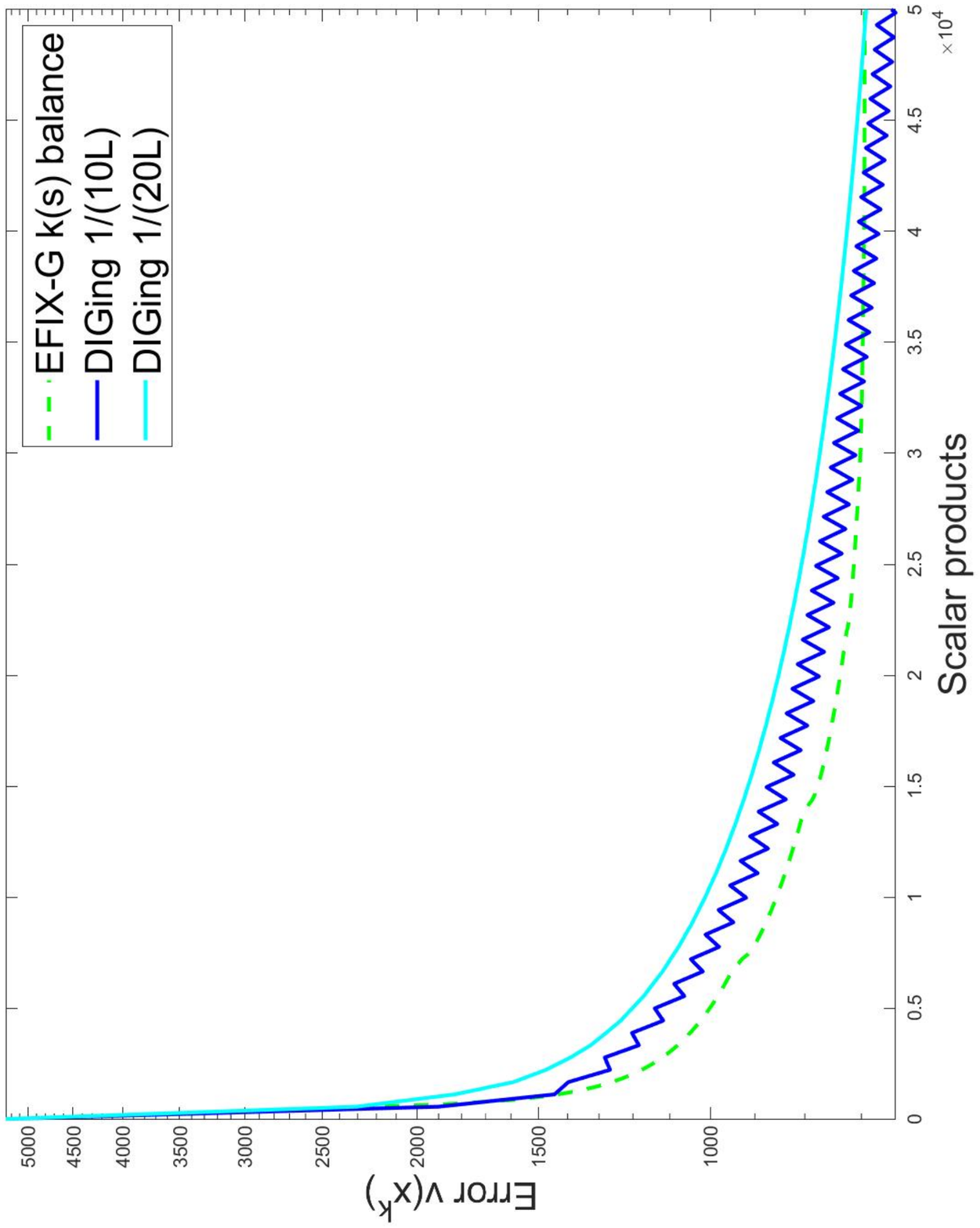}\includegraphics[width=0.3\textwidth, angle=-90]{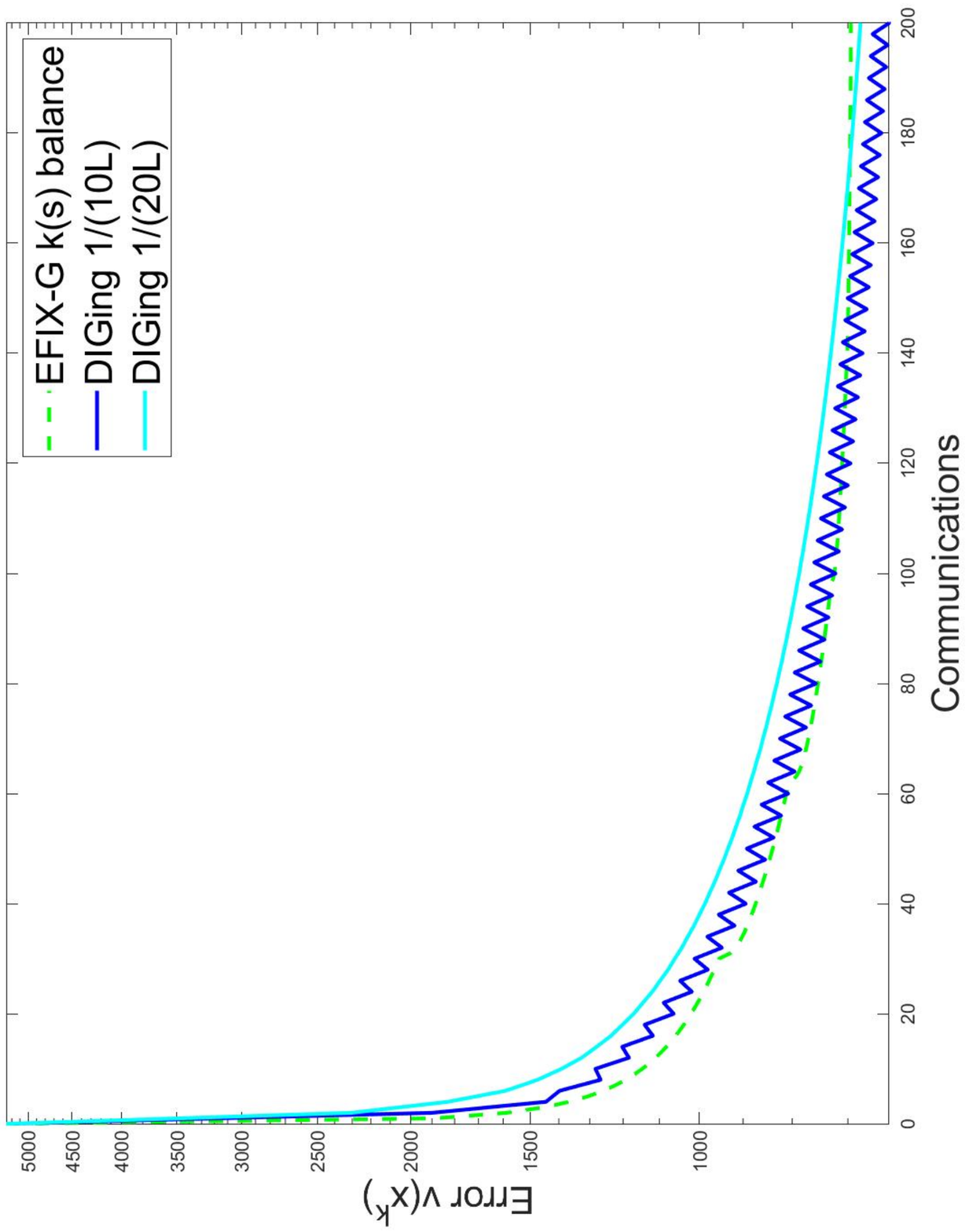}
    \caption{{\footnotesize{The proposed method (dotted line) versus the DIGing method on  Mushrooms (top), CINA0 (middle) and Small MNIST data set (bottom).  }} }
    \label{all2}
\end{figure}

\section{Conclusions}
The quadratic penalty framework is extended to distributed optimization problems. Instead of standard reformulation with quadratic penalty for distributed problems, we define a sequence of quadratic penalty subproblems with increasing penalty parameters. Each subproblem is then approximately solved by a distributed fixed point linear solver. In the paper we used the Jacobi and Jacobi Over-Relaxation method as the linear solvers, to facilitate the explanations. The first class of optimization problems we consider are quadratic problems with positive definite Hessian matrices. For these problems we define the EFIX-Q method, discuss the convergence properties and derive a set of conditions on penalty parameters, linear solver precision and inner iteration number that yield an iterative sequence which converges to the solution of the original, distributed and unconstrained problem. Furthermore, the complexity  bound of $ {\cal O}(\epsilon^{-1}) $ is derived. In the case of strongly convex generic function we define EFIX-G method. It follows the reasoning for the quadratic problems and in each outer iteration we define a quadratic model of the objective function and couple that model with the quadratic penalty. Hence, we are again solving a sequence of quadratic subproblems. The convergence statement is weaker in  this case but nevertheless corresponds to the classical statement in the centralized penalty methods - we prove that if the sequence converges then its limit is a solution of the original problem. 
The method is dependent on penalty parameters, precision of the linear solver for each subproblem and  consequently, the number of inner iterations for subproblems. As quadratic penalty function is not exact, the approximation error is always present and hence we investigated the mutual dependence of different errors. A suitable choice for the penalty parameters, subproblem accuracy and inner iteration number is proposed for quadratic problems and extended to the generic case. 
The method is tested and compared with the state-of-the-art first order exact method for distributed optimization, DIGing. It is shown that EFIX is highly comparable with DIGing in terms of error propagation with respect to iterations and that EFIX computational and communication costs are lower in comparison with DIGing methods.   

\section*{Acknowledgements}
This work is supported by the  Ministry of Education,
Science and Technological Development, Republic of Serbia.


\begin{thebibliography}{99}
 



\bibitem{novo3}  Baingana, B.,  Giannakis, G., B.,  Joint Community and Anomaly Tracking in Dynamic Networks,  IEEE Transactions on Signal Processing, 64(8), (2016), pp. 2013-2025.

\bibitem{wei} A. S. Berahas, A. S.,  Bollapragada, R.,   Keskar, N. S.,   Wei, E., Balancing Communication and Computation in Distributed Optimization, IEEE Transactions on Automatic Control, 64(8), (2019), pp. 3141-3155.








 \bibitem{BoydADMM}
Boyd, S., Parikh, N., Chu, E., Peleato, B., Eckstein, J., Distributed   optimization and statistical learning via the alternating direction method of   multipliers, Foundations and Trends in Machine Learning, 3(1),  (2011) pp. 1-122.



\bibitem{SayedEstimation}
Cattivelli, F.,  Sayed,  A.~H., Diffusion {LMS} strategies for distributed   estimation, 
IEEE Transactions on  Signal  Processing,  58(3),  (2010) pp.   1035--1048.

\bibitem{cina0} Causality workbench team, a marketing dataset,  http://www.causality.inf.ethz.ch/data/CINA.html.





\bibitem{scutari} Di Lorenzo, P.,   Scutari, G.,  Distributed nonconvex optimization over networks, in IEEE International Conference on Computational Advances in Multi-Sensor Adaptive Processing (CAMSAP), (2015), pp. 229-232.




\bibitem{greenbaum} Greenbaum, A.,  Iterative Methods for Solving Linear Systems, SIAM, 1997.

\bibitem{dusan} Jakoveti\'c, D., A Unification and Generalization of Exact Distributed First Order Methods, IEEE Transactions on Signal and Information Processing over Networks, 5(1), (2019), pp. 31-46.
 
 
 \bibitem{DFIX} Jakoveti\'c, D.,  Kreji\'c, N.,  Krklec Jerinki\'c, N. , Malaspina, G. ,  Micheletti, A.,  Distributed Fixed Point Method for Solving Systems of Linear Algebraic Equations,
	arXiv:2001.03968,  (2020).

 


  \bibitem{arxivVersion}
Jakoveti\'c, D., Xavier, J.,  Moura, J.~M.~F.,  Fast distributed gradient
  methods, \emph{IEEE Transactions on  Automatic Control}, 59(5), (2014)  pp. 1131--1146.



\bibitem{Espectral} Jakoveti\'c, D.,  Kreji\'c, N.,  Krklec Jerinki\'c, N.,  Exact spectral-like gradient method for distributed optimization, Computational Optimization and Applications, 74,  (2019), pp. 703–728.

%
  
 \bibitem{novo1}  Lee, J. M.,  Song, I.,  Jung, S.,   Lee, J.,  A rate adaptive convolutional coding method for multicarrier DS/CDMA systems, MILCOM 2000 Proceedings 21st Century Military Communications. Architectures and Technologies for Information Superiority (Cat. No.00CH37155), Los Angeles, CA, (2000), pp. 932-936. 
 
\bibitem{qpLinovi}
Li, H.,   Fang, C.,   Yin, W.,  Lin, Z.,  Decentralized Accelerated Gradient Methods With Increasing Penalty Parameters,  IEEE Transactions on Signal Processing, 68, pp. 4855-4870, (2020).

\bibitem{qpLi}
Li, H.,  Fang, C.,   Lin, Z., Convergence Rates Analysis of The Quadratic Penalty Method and Its Applications to Decentralized Distributed Optimization, 
arxiv preprint, arXiv:1711.10802, (2017).


  
%
%
%
%
%
%
%
%

 \bibitem{JoaoMotaMPC}
Mota, J., Xavier, J., Aguiar, P., P\"uschel, M., Distributed optimization
  with local domains: Applications in MPC and network flows, IEEE Transactions on Automatic Control, 60(7), (2015), pp. 2004-2009. 

\bibitem{angelia} Nedic, A., Olshevsky, A., Shi, W., Uribe, C.A., Geometrically convergent distributed optimization with uncoordinated step-sizes,
2017 American Control Conference (ACC), Seattle, WA, 2017, pp. 3950-3955, doi: 10.23919/ACC.2017.7963560



\bibitem{nedic_T-AC}
Nedi\'c, A., Ozdaglar, A., Distributed subgradient methods for multi-agent
  optimization, IEEE Transactions on Automatic Control,  54(1), (2009), pp. 48--61.

\bibitem{Nocedal} Nocedal, J.,  Wright, S. J.,
 Numerical Optimization,  Springer, 1999.
 
 \bibitem{smallmnist} Outlier Detection Datasets (ODDS) http://odds.cs.stonybrook.edu/mnist-dataset/.


 
\bibitem{harnessing} Qu, G., Li, N., Harnessing smoothness to accelerate distributed optimization, IEEE Transactions on Control of Network Systems, 5(3), (2018),  pp. 1245-1260. 

%
%

\bibitem{d1} Saadatniaki, F., Xin, R., Khan, U. A., Decentralized optimization over
time-varying directed graphs with row and column-stochastic matrices,
IEEE Transactions on Automatic Control, (2018).


%
\bibitem{scutari2} Scutari, G., Sun, Y., Parallel and Distributed Successive Convex Approximation Methods for Big-Data Optimization, 	
arXiv:1805.06963, (2018).

\bibitem{d2} Scutari, G., Sun, Y., Distributed Nonconvex Constrained Optimization
over Time-Varying Digraphs, Mathematical Programming, 176(1-2), (2019),  pp.
497-544.

\bibitem{extra}
Shi, W., Ling, Q., Wu, G., Yin, W.,  EXTRA: an Exact First-Order Algorithm for Decentralized Consensus Optimization,
SIAM Journal on Optimization, 2(25), (2015), pp. 944-966.

\bibitem{scno}
Srivastava, P.,  Cortés, J.,  Distributed Algorithm via Continuously Differentiable Exact Penalty Method for Network Optimization, 2018 IEEE Conference on Decision and Control (CDC), Miami Beach, FL, (2018), pp. 975-980.

\bibitem{d3} Sun, Y., Daneshmand, A., Scutari, G., Convergence Rate of Distributed
Optimization Algorithms based on Gradient Tracking, 
arXiv:1905.02637, (2019).

\bibitem{d4} Sundararajan, A., Van Scoy, B., Lessard, L., Analysis and Design of
First-Order Distributed Optimization Algorithms over Time-Varying
Graphs, arXiv:1907.05448, (2019).

\bibitem{d5} Tian, Y., Sun, Y., Scutari, G., Achieving Linear Convergence in Distributed Asynchronous Multi-agent Optimization, IEEE Trans. on
Automatic Control, (2020).

\bibitem{d6} Tian, Y., Sun, Y., Scutari, G., Asynchronous Decentralized Successive
Convex Approximation, arXiv:1909.10144, (2020).


\bibitem{mush} UCI Machine Learning Expository, https://archive.ics.uci.edu/ml/datasets/Mushroom.

%
%

\bibitem{d7} Xiao, L., Boyd, S. and Lall, S.,  Distributed average consensus with time-varying metropolis weights,  Automatica, (2006).

\bibitem{d8} Xin, R.,  Khan, U. A., Distributed Heavy-Ball: A Generalization and Acceleration of First-Order Methods With Gradient Tracking,  IEEE Transactions on Automatic Control, 65(6), (2020), pp. 2627-2633.

\bibitem{d9} Xin, R., Xi, C., Khan, U. A., FROST--Fast row-stochastic optimization
with uncoordinated step-sizes, EURASIP Journal on Advances in Signal
Processing, Special Issue on Optimization, Learning, and Adaptation over
Networks, 1, (2019).


\bibitem{d10} Xu, J., Tian, Y., Sun, Y., Scutari G., Distributed Algorithms for Composite Optimization: Unified Framework and Convergence Analysis,  arXiv:2002.11534, (2020).



\bibitem{yuan} Yuan, K., Ling, Q., Yin, W., On the convergence of decentralized gradient descent, SIAM Journal on Optimization 26(3),  (2016), pp. 1835–1854.

\bibitem{Nedickaskadni}  Yousefian, F.,  Nedi\'c, A.,  Shanbhag, U. V., On stochastic gradient and subgradient methods with adaptive steplength sequences, Automatica, 48(1), (2012), pp. 56-67.


\bibitem{Zhou}  Zhou, H.,   Zeng, X.,  Hong, Y.,  Adaptive Exact Penalty Design for Constrained Distributed Optimization,  IEEE Transactions on Automatic Control, 64(11), (2019), pp. 4661-4667.







 \end{thebibliography}
\end{document}